\newtheorem{thm}{Theorem}[section]
\newtheorem{lem}[thm]{Lemma}
\newtheorem{rem}[thm]{Remark}
\newtheorem{cor}[thm]{Corollary}
\newtheorem{con}[thm]{Condition}
\newtheorem{prop}[thm]{Proposition}
\newcommand{\PtV}{\mathbf{Pt}(\V)}
\newcommand{\Pt}[1]{\mathbf{Pt}(\mathbb{#1})}
\newcommand{\Pts}[1]{\mathbf{Pt}(#1)}
\newcommand{\BB}[1]{\mathbb{#1}}
\newcommand{\BF}[1]{\mathbf{#1}}
\newcommand{\SE}[1]{\mathbf{SplExt}(#1)}
\newcommand{\V}{\cal{V}}
\newcommand{\Set}{\mathbf{Set}}
\newcommand{\pb}[2]{\substack{ \hole  \\ \times\\\langle #1,#2\rangle}}
\newcommand{\ti}[1]{\tilde{#1}}
\title{ON ALGEBRAIC AND MORE GENERAL CATEGORIES WHOSE SPLIT EPIMORPHISMS HAVE UNDERLYING PRODUCT PROJECTIONS}
\author{J. R. A. GRAY AND N. MARTINS-FERREIRA}
\begin{document}
\maketitle
\begin{abstract}
We characterize those varieties of universal algebras where every split epimorphism considered as a map of sets is a product projection.  In addition we obtain new characterizations of protomodular, unital and subtractive varieties as well as varieties of right $\Omega$-loops and biternary systems.
\end{abstract}
\section*{Introduction}
It is well known that in the category of groups if
\[\xymatrix{
0\ar[r] & K \ar[r] ^{\kappa} & A\ar[r]^{\alpha} & B\ar[r] & 0
}
\]
is a short exact sequence, then $A$ and $K\times B$ are bijective as sets, moreover when $\alpha$ is split, i.e. for each split extension 
\[\xymatrix{ K \ar[r]^{\kappa} & A \ar@<0.5ex>[r]^{\alpha} & B\ar@<0.5ex>[l]^{\beta}},\ \alpha\beta=1_B,\ \kappa = \ker{(\alpha)},\]
this bijection becomes a natural bijection $K\times B\to A$ such that the diagram
\[
\xymatrix{
K\ar@{=}[d] \ar[r]^{\langle 1,0\rangle} & K\times B\ar[d]^{\varphi} \ar@<0.5ex>[r]^{\pi_2} & B\ar@{=}[d]\ar@<0.5ex>[l]^{\langle 0,1\rangle}\\ 
 K \ar[r]^{\kappa} & A \ar@<0.5ex>[r]^{\alpha} & B\ar@<0.5ex>[l]^{\beta}}
\]
is a morphism of split extensions in the category $\BF{Set}$, of sets, that is, $\alpha\varphi=\pi_2$, $\varphi\langle 0,1\rangle=\beta$, and $\varphi \langle 1,0\rangle = \kappa$.  As shown by E. B. Inyangala, these bijections exists in a more general setting of a variety of right $\Omega$-loops (see \cite{Inyangala 2010,Inyangala 2011}), that is, a pointed variety of universal algebras $\V$ with constant $0$ and binary terms $x+y$ and $x-y$ satisfying the identities:
\begin{eqnarray}
x + 0 = x\\
x-x = 0\\
(x+y)-y=x\\
(x-y)+y=x
\end{eqnarray}
Moreover, he showed that if a pointed variety $\V$ with constant $0$ has binary terms $x+y$ and $x-y$ and there exist bijections (as above) constructed (in the same way as for groups) using those terms, i.e. $\varphi(k,b)=\kappa(k)+\beta(b)$ and $\varphi^{-1}(a)=(\lambda(a),\alpha(a))$, where $\lambda$ is the unique map such that $\kappa\lambda(a)=a-\beta\alpha(a)$ , then $\V$ is a variety of right $\Omega$-loops and in particular the identities (1) - (4) hold for $x+y$ and $x-y$.  In this paper we prove that if for a pointed variety $\V$ there exist natural bijections as above, then $\V$ is a variety of right $\Omega$-loops (see Theorem \ref{pointed variety classification}).\\[10pt]
For any category $\BB{C}$ let $\Pt{C}$ to be the category of split epimorphisms in $\mathbb{C}$: an object is a quadruple $(A,B,\alpha,\beta)$ where $A$ and $B$ are objects in $\BB{C}$ and $\alpha :A\to B$ and $\beta : B\to A$ are morphisms in $\BB{C}$ with $\alpha\beta =1_B$; a morphism $(A,B,\alpha,\beta) \to (A',B',\alpha',\beta')$ is a pair of morphisms $(f:A\to A',g:B\to B')$ such that in the diagram
\[
\xymatrix{
A\ar@<0.5ex>[r]^{\alpha}\ar[d]_{f} & B\ar@<0.5ex>[l]^{\beta} \ar[d]^{g}\\
A'\ar@<0.5ex>[r]^{\alpha'} & B'\ar@<0.5ex>[l]^{\beta'} 
}
\]
$\alpha'f=g\alpha$ and $f\beta=\beta'g$.
Throughout this paper for any objects $A$ and $B$ we will denote by $\pi_1$ and $\pi_2$ the first and second product projections respectively.  We will use the same notation for the first and second pullback projections and will write \[(A\pb{f}{g} B,\pi_1,\pi_2)\] for the pullback of $f:A\to C$ and $g:B\to C$ as in the diagram
\[
\xymatrix{
A \pb{f}{g} B \ar[r]^{\pi_2}\ar[d]_{\pi_1} & B\ar[d]^{g}\\
A \ar[r]_{f} & C.
}
\]
For any morphisms $u:W\to A$ and $v: W\to B$ with $fu=gv$ we will write \[\langle u,v\rangle : W\to A\pb{f}{g} B\] for the unique morphism with $\pi_1\langle u,v\rangle=u$ and $\pi_2\langle u,v\rangle=v$.\\[10pt]
We prove that for a pointed variety $\V$, if for each $(A,B,\alpha,\beta)$ in $\PtV$ there exists a natural bijection $\varphi: K\times B \to A$, where $\kappa:K\to A$ is the kernel of $\alpha$,  such that the diagram
\[
\xymatrix{
K\times B \ar@<0.5ex>[r]^{\pi_2}\ar[d]_{\varphi} & B\ar@{=}[d]\ar@<0.5ex>[l]^{\langle 0,1\rangle}\\
A\ar@<0.5ex>[r]^{\alpha}& B\ar@<0.5ex>[l]^{\beta}
}
\]
is a morphism in $\Pts{\BF{Set}}$, then $\V$ is a variety of right $\Omega$-loops (see Corollary \ref{main classification of right omega loops} ).
There is a \emph{natural} generalization of this condition for any variety $\V$, namely asking for each $(A,B,\alpha,\beta)$ in $\PtV$ and for each morphism $f:E\to B$ that there exists a bijection \[\varphi: (A\pb{\alpha}{f}E)\times B \to E \times A \] natural in both $(A,B,\alpha,\beta)$ and $f:E\to B$,
such that the diagram
\[
\xymatrix{
(A\pb{\alpha}{f}E)\times B \ar@<0.5ex>[r]^{\pi_2\times 1}\ar[d]_{\varphi} & E\times B \ar@<0.5ex>[l]^{\langle \beta f,1\rangle \times 1}\ar@{=}[d]\\
E\times A \ar@<0.5ex>[r]^{1\times \alpha} & E\times B \ar@<0.5ex>[l]^{1\times \beta}
}
\]
is a morphism in $\Pts{\BF{Set}}$. It is clear that for a pointed variety this condition implies the previous condition, since taking $E$ to be the zero object and $f$ to be the unique morphism from $E$ to $B$ makes
\[\pi_1:A\pb{\alpha}{f}E \to A \]
the kernel of $\alpha$. In Section \ref{general varieties} we prove that this condition is equivalent to the same condition under the restriction that each $f$ as above is an identity morphism (see Theorem \ref{generalization of introduction equivalent formulations}).  We also prove that a variety satisfies this condition if and only if it is a \emph{biternary system} \cite{Mal'tsev 1954} that is there exist ternary terms $p(x,y,z)$ and $q(x,y,z)$ satisfying the identities
\begin{eqnarray}
p(x,x,y)=y\\
p(q(x,y,z),z,y)=x=q(p(x,y,z),z,y).
\end{eqnarray}
However, there are other generalizations that may be considered.  In a variety $\V$ with constants, for each $X$, let $\theta_X: 1\to X^n$ be a map (natural in $X$) such that the composite with each product projection $\pi_i:X^n\to X$ gives a constant. We could then consider the following condition: for each $(A,B,\alpha,\beta)$ in $\PtV$ there exists a natural split epimorphism (in the category of sets) \[\varphi : (A^n\pb{\alpha^n}{\theta_B} 1) \times B \to A\] with splitting \[
\psi : A \to (A^n\pb{\alpha^n}{\theta_B} 1) \times B\]
such that in the diagram
\[
\xymatrix@C=12ex{
(A^n\pb{\alpha^n}{\theta_B} 1)\times B \ar@<0.5ex>[d]^{\varphi} \ar@<0.5ex>[r]^{\pi_2} & B \ar@<0.5ex>[l]^{(\langle \theta_A,1\rangle !_B) \times 1}\ar@{=}[d]\\
A \ar@<0.5ex>[r]^{\alpha}\ar@<0.5ex>[u]^{\psi} & B\ar@<0.5ex>[l]^{\beta}
}
\]
 the upward and downward directed sub-diagrams are morphisms in $\Pts{\BF{Set}}$.  We prove in Section \ref{varieties with constants} that this condition is equivalent to $\V$ being a protomodular variety \cite{Bourn Janelidze 2003} of \emph{type} $n$, that is, a variety $\V$ with constants $e_1, \dots, e_n$, binary terms $s_1(x,y),\dots, s_n(x,y)$ and an $n+1$-ary term $p(x_1,\dots,x_n,z)$ satisfying the identities:
\begin{eqnarray}
s_i(x,x)=e_i \ i\in \{1,\dots,n\} \\
p(s_1(x,z),\dots,s_n(x,z),z)=x.
\end{eqnarray}
Note that requiring $\varphi$ to be a bijection gives the addition conditions
\begin{eqnarray}
s_i(p(x_1,...,x_n,y),y))=x_i\textnormal{ for all }i \in {1,...,n}.
\end{eqnarray}
In order to study these conditions simultaneously we make a further generalization described in Section 1.
\section{The general setting}
\label{general theory}
In this section we replace a forgetful functor from a variety into the category of sets (or pointed sets) with an abstract functor (satisfying certain conditions) and consider a generalization allowing us to study simultaneously both generalizations discussed in the introduction.

For a set $\BF{n}$, a category $\BB{D}$ with finite products and products indexed over $\BF{n}$, and for functors $F,G,H:\BB{C}\to \BB{D}$ we denote by $F^\BF{n}$ the $\BF{n}$ indexed product of $F$ with itself and by $G\times H$  the product of $G$ and $H$ in the functor category $\BB{D} ^\BB{C}$.\\[10pt]
Throughout this section we will assume that:
\begin{enumerate}
\item $\mathbb{A}$ is a category with finite products;
\item $\BF{m}$ and $\BF{n}$ are sets;
\item $\mathbb{X}$ is a category with finite limits and products indexed by the sets $\BF{m}$ and $\BF{n}$;
\item $U: \mathbb{A}\to \mathbb{X}$ is a functor preserving finite products;
\item $\theta:U^\BF{m}\to U^\BF{n}$ is a natural transformation.
\end{enumerate}
Let $\Delta : \BB{A} \to \Pt{A}$ be the functor sending $X$ in $\BB{A}$ to $(X\times X,X,\pi_2,\langle 1,1\rangle)$ and let $D_\BB{A}$ be the functor $\Pt{A} \to \BB{A}$ taking $(A,B,\alpha,\beta)$ to $B$.
Let $V: \mathbf{Pt}(\mathbb{A}) \to \Pt{X}$ and $W:\Pt{A} \to \Pt{X}$ be the functors sending $(A,B,\alpha,\beta)$ in $\mathbf{Pt}(\mathbb{A})$ to 
\[((U(A)^\BF{n}\pb{U(\alpha)^\BF{n}}{\theta_B} U(B)^\BF{m})\times U(B),U(B)^\BF{m}\times U(B),\pi_2\times 1,\langle U(\beta)^\BF{n}\theta_B, 1\rangle\times 1)\]
and
\[( U(B)^\BF{m} \times U(A),U(B)^\BF{m} \times U(B),1\times U(\alpha),1\times U(\beta) )\]
respectively.\\
From the beginning of the next section we will consider the case where $\mathbb{A}$ is a variety, $\mathbb{X}$ is the category of sets, $U$ is the usual forgetful functor from the variety to the category of sets, $\BF{m}=\{1,\dots,m\}$, $\BF{n}=\{1,\dots,n\}$, and $\theta$ is constructed from $n$ $m$-ary terms of $\mathbb{A}$.  In particular when $\mathbb{A}$ is pointed with constant $0$, $\BF{n}=\{1\}$, $\BF{m}=\emptyset$, and $\theta : U^{\BF{m}}\to U^{\BF{n}}$ is the natural transformation with component at $X$ $\theta_X(1)=0$ (where $1$ is the unique element in $U^{\BF{m}}(X)$), it can be seen that \[\pi_1:U(A)^\BF{n}\pb{U(\alpha)^\BF{n}}{\theta_B} U(B)^\BF{m}\to U(A)\] is up to isomorphism the image under $U$ of the kernel of $\alpha$ and the bijections mentioned at the start of the introduction become components of a natural transformation $V\to W$.
\begin{lem}
Each of the following types of data uniquely determine each other:
\begin{enumerate}[(a)]
\item a natural transformation $\tau:V \to W$;
\item a natural transformation $\overline{\tau}:V\Delta \to W\Delta$;
\item natural transformations $\rho:(U^\BF{n}\times U^\BF{m})\times U \to U$ and $\zeta : U^\BF{m}\times U \to U^\BF{m}$;
\end{enumerate}
\end{lem}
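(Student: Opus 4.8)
The plan is to move between the three kinds of data by explicit passages and to check that the two round trips are identities, the guiding principle being that $U$ preserves finite products, so that over a diagonal object $\Delta(X)$ the defining pullback collapses to an honest product. The passage (a)$\to$(b) is just restriction, $\overline{\tau}=\tau\Delta$.

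For (b)$\to$(c) I would unwind $V\Delta$ and $W\Delta$. Since $\Delta(X)=(X\times X,X,\pi_2,\langle 1,1\rangle)$ and $U$ preserves products, $U(X\times X)=U(X)\times U(X)$ with $U(\pi_2)$, $U(\langle 1,1\rangle)$ the evident projection and diagonal; hence the pullback $U(X\times X)^{\BF{n}}\pb{U(\pi_2)^{\BF{n}}}{\theta_X}U(X)^{\BF{m}}$ is isomorphic to the plain product $U(X)^{\BF{n}}\times U(X)^{\BF{m}}$, the fibre condition pinning the ``second'' $U(X)^{\BF{n}}$-coordinate to $\theta_X$ of the $U(X)^{\BF{m}}$-coordinate. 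Under this identification $V\Delta(X)$ and $W\Delta(X)$ share the base $U(X)^{\BF{m}}\times U(X)$, so a morphism of points $\overline{\tau}_X=(f,g)$ has $g$ forced by $f$ (the relevant $\alpha$ being a split epimorphism), and the two point-morphism equations annihilate every coordinate of $f$ except two independent natural pieces: the coordinate landing in $U^{\BF{m}}$ gives $\zeta$, and the $\pi_1$-coordinate of $U(X\times X)=U(X)\times U(X)$ gives $\rho$ (the $\pi_2$-coordinate being then forced, equal to $\rho$ precomposed with $\theta$). This yields the bijection (b)$\leftrightarrow$(c).

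For (c)$\to$(a) I would write $\tau_P$ down directly. Writing $p_1,p_2$ for the projections of the pullback $K=U(A)^{\BF{n}}\pb{U(\alpha)^{\BF{n}}}{\theta_B}U(B)^{\BF{m}}$ and $\pi_1,\pi_2$ for the product projections of $K\times U(B)$, I set the total component $f_P=\bigl\langle \zeta_B\langle p_2\pi_1,\pi_2\rangle,\ \rho_A\langle p_1\pi_1,(U\beta)^{\BF{m}}p_2\pi_1,U\beta\,\pi_2\rangle\bigr\rangle$ and let $g_P$ be induced by $\alpha$-compatibility. The point to verify — and the step I expect to be the main obstacle — is that this $\tau_P$ is genuinely a morphism in $\Pts{\BF{Set}}$ (more generally in $\Pt{X}$): naturality of $\rho$ along $\alpha$, together with the pullback equation $U(\alpha)^{\BF{n}}p_1=\theta_B p_2$ and $U(\alpha)U(\beta)=1$, forces the $\alpha$-image of the $U(A)$-component to be independent of the fibre coordinate, so that $g_P$ is well defined; naturality of $\rho$ along $\beta$ then gives the compatibility $f_P\beta_{V(P)}=\beta_{W(P)}g_P$ with the sections. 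Naturality of $\tau$ in $P$ follows from naturality of $\rho,\zeta$ and functoriality of $U$ and of the pullback.

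It remains to check the two round trips. The composite (c)$\to$(a)$\to$(b)$\to$(c) returns $(\rho,\zeta)$ by evaluating the constructed $\tau$ at $\Delta$-objects, reproducing the computation above. For (a)$\to$(b)$\to$(c)$\to$(a) I would show that every $\tau$ is already determined by $\overline{\tau}=\tau\Delta$, so that the reconstructed $\tau'$ agrees with the original. The device is the morphism $m_P=(\langle 1_A,\beta\alpha\rangle,\beta):P\to\Delta(A)$ in $\Pt{A}$, which one checks is natural in $P$ (using $u\beta=\beta'v$ and $\alpha'u=v\alpha$). Applying $W$ gives, on total objects, $(U\beta)^{\BF{m}}\times U\langle 1_A,\beta\alpha\rangle$; since $\langle 1_A,\beta\alpha\rangle$ is split monic (retraction $\pi_1$) and $U$ preserves products, $W(m_P)$ is a split monomorphism. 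The naturality square $W(m_P)\circ\tau_P=\overline{\tau}_A\circ V(m_P)$ then determines $\tau_P$ uniquely by left-cancelling $W(m_P)$, which closes all three bijections.
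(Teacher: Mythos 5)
Your proposal is correct and follows essentially the same route as the paper: the collapse of the pullback over $\Delta(X)$ to $U(X)^{\BF{n}}\times U(X)^{\BF{m}}$ to extract $(\rho,\zeta)$ from $\overline{\tau}$, the explicit formula $\bigl(\zeta_B(y,b),\rho_A(a,U(\beta)^{\BF{m}}y,U(\beta)b)\bigr)$ for the reconstructed component, and the naturality square along $(\langle 1,\beta\alpha\rangle,\beta):(A,B,\alpha,\beta)\to\Delta(A)$ with the split monomorphism $U(\beta)^{\BF{m}}\times U(\langle 1,\beta\alpha\rangle)$ are precisely the content of the paper's commutative diagrams. No gaps.
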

\begin{proof}
For each $(A,B,\alpha,\beta)$ in $\Pt{A}$ and $X$ in $\BB{A}$, let $(\varphi_{1_{(A,B,\alpha,\beta)}},\varphi_{0_{(A,B,\alpha,\beta)}})=\tau_{(A,B,\alpha,\beta)}$ and $(\overline{\varphi}_{1_{X}},\overline{\varphi}_{0_{X}})=\overline{\tau}_X$.
The diagram 
\[
\xymatrix@C=-4 ex{
& P_X\times U(X) \ar[dl]_{\langle U(\pi_1)^\BF{n}\pi_1,\pi_2\rangle \times 1\ \ }\ar@<0.5ex>[ddd]|(0.35){\hole}|(0.4){\hole}|(0.42){\hole}^{\pi_2 \times 1}\ar[rrrrr]^{\overline{\varphi}_{1_{X}}} & & & & & U(X)^\BF{m} \times U(X\times X)\ar[dllll]_{1\times \langle U(\pi_1),U(\pi_2)\rangle\ }\ar@<0.5ex>[ddd]|(0.32){\hole}|(0.37){\hole}|(0.41){\hole}^(0.5){1\times U(\pi_2)}\\
(U(X)^\BF{n} \times U(X)^\BF{m}) \times U(X) \ar@/_2ex/@<0.5ex>[rdd]^{\pi_2\times 1} \ar[rr]_{p_X} & & U(X)^\BF{m}\times (U(X)\times U(X))\ar@/_2ex/@<0.5ex>[rrrrdd]^{1\times \pi_2} & & & & & \\
\\
& U(X)^\BF{m}\times U(X) \ar@<0.5ex>[uuu]^{\langle U\langle 1,1\rangle^\BF{n}\theta_X,1\rangle \times 1}|(0.58){\hole}|(0.6){\hole}|(0.65){\hole}\ar[rrrrr]_{\overline{\varphi}_{0_{X}}} \ar@/^2ex/@<0.5ex>[luu]^{\langle \theta_X,1\rangle \times 1}& & & & & U(X)^\BF{m}\times U(X),\ar@<0.5ex>[uuu]^(0.5){1\times U\langle 1,1\rangle}|(0.59){\hole}|(0.63){\hole}|(0.68){\hole}\ar@/^2ex/@<0.5ex>[lllluu]^{1\times \langle 1,1\rangle}
}
\]
in which \[P_X=U(X\times X)^\BF{n}\pb{U(\pi_2)^n}{\theta_X} U(X)^\BF{m}\] and  \[p_X =\langle \zeta_X(\pi_2\times 1),\langle \rho_X,\rho_X (\langle \theta_X\pi_2,\pi_2\rangle\times 1)\rangle \rangle,\]
is a commutative diagram of morphisms in $\Pt{X}$, and shows the relationship between $\overline{\tau}$ and $\rho$ and $\zeta$. The commutative diagrams \[
\xymatrix@C=15ex{
(U(A)^\BF{n}\pb{U(\alpha)^\BF{n}}{\theta_B} U(B)^\BF{m})\times U(B)\ar`l[dd] `[dd]^(0.25){\langle \pi_1,U(\beta)^\BF{m}\pi_2\rangle \times U(\beta)} [dd] \ar[r]^(0.6){\varphi_{1_{(A,B,\alpha,\beta)}}}\ar[d]^{(U(\langle 1,\beta\alpha\rangle)^\BF{n}\times U(\beta)^\BF{m})\times U(\beta)} & U(B)^\BF{m}\times U(A)\ar[d]_{U(\beta)^\BF{m}\times U(\langle 1,\beta\alpha\rangle)}\ar@<0.2ex>@{-}`r[dd] `[dd] [dd]\ar@<-0.2ex>@{-}`r[dd] `[dd] [dd]\\
(U(A\times A)^\BF{n}\pb{U(\pi_2)^\BF{n}}{\theta_A} U(A)^\BF{m})\ar[r]_(0.6){\varphi_{1_{\Delta(A)}}=\overline{\varphi}_{1_A}} \times U(A)\ar[d]^{\langle U(\pi_1)^\BF{n}\pi_1,\pi_2\rangle\times 1}& U(A)^\BF{m}\times U(A\times A)\ar[d]_{U(\alpha)^\BF{m}\times U(\pi_1)}\\
(U(A)^\BF{n}\times U(A)^\BF{n})\times U(A)\ar[r]_{\langle U(\alpha)^\BF{m}\zeta_A(\pi_2\times 1),\rho_A\rangle}& U(B)^\BF{m}\times U(A)
}
\]
\[
\xymatrix@C=15ex
{
U(B)^\BF{m}\times U(B) \ar[r]^{\varphi_{0_{(A,B,\alpha,\beta)}}} \ar[d]^{U(\beta)^\BF{m}\times U(\beta)} \ar@<0.2ex>@{-}`l[dd] `[dd] [dd]\ar@<-0.2ex>@{-}`l[dd] `[dd] [dd] & U(B^m\times U(B))\ar[d]_{U(\beta)^\BF{m}\times U(\beta)} \ar@<0.2ex>@{-}`r[dd] `[dd] [dd]\ar@<-0.2ex>@{-}`r[dd] `[dd] [dd]\\
U(A)^m\times U(A)\ar[r]^{\varphi_{0_{\Delta(A)}}} \ar[d]^{U(\alpha)^\BF{m}\times U(\alpha)}& U(A)^\BF{m}\times U(A)\ar[d]_{U(\alpha)^\BF{m}\times U(\alpha)} \\
U(B)^\BF{m}\times U(B)\ar[r]^{\varphi_{0_{\Delta(B)}}} \ar[dr]_{\langle \zeta_B,\rho_B(\langle \theta_B,1\rangle \times 1) \rangle}& U(B)^\BF{m}\times U(B) \ar@{=}[d] \\
& U(B)^\BF{m}\times U(B)
}
\]
show the relationships between $\tau$ and $\overline{\tau}$, and $\tau$ and $\rho$ and $\zeta$.
\end{proof}
\begin{lem}
Each of the following types of data uniquely determine each other:
\begin{enumerate}[(a)]
\item a natural transformation $\gamma: W\to V$;
\item a natural transformation $\overline{\gamma} : W\Delta \to V\Delta$;
\item natural transformations $\sigma : U^\BF{m}\times (U\times U) \to U^\BF{n}$, $\eta : U^\BF{m}\times U \to U^\BF{m}$ and $\epsilon : U^\BF{m}\times U \to U$ with components at each $X$ in $\BB{A}$ making the diagram
\begin{equation}
\label{axiom1}
\vcenter{
\xymatrix{
U(X)^\BF{m}\times U(X) \ar[r]^{1\times\langle 1,1\rangle} \ar[d]_{\eta_X} & U(X)^\BF{m}\times (U(X)\times U(X)) \ar[d]^{\sigma_X} \\
U(X)^\BF{m} \ar[r]_{\theta_X} & U(X)^\BF{n}
}
}
\end{equation}
commute.
\end{enumerate}
\end{lem}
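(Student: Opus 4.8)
The plan is to establish the two equivalences (a) $\Leftrightarrow$ (b) and (b) $\Leftrightarrow$ (c) separately, paralleling the previous lemma but with every natural transformation reversed in direction. Throughout I will use that $U$ preserves finite products to identify $U(X\times X)$ with $U(X)\times U(X)$, and correspondingly to identify the total object of $V\Delta(X)$, namely $(U(X\times X)^{\BF{n}}\pb{U(\pi_2)^{\BF{n}}}{\theta_X}U(X)^{\BF{m}})\times U(X)$, with $U(X)^{\BF{n}}\times U(X)^{\BF{m}}\times U(X)$: the pullback condition forces the second-coordinate part of the $U(X\times X)^{\BF{n}}$-factor to be $\theta_X$ of the $U(X)^{\BF{m}}$-factor, so only the first coordinate is free.

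For (a) $\Rightarrow$ (b) I would simply whisker with $\Delta$, setting $\overline{\gamma}=\gamma\Delta$. For (b) $\Rightarrow$ (c), I would write $\overline{\gamma}_X=(\overline{\gamma}_{1_X},\overline{\gamma}_{0_X})$ as a morphism in $\Pt{X}$ from $W\Delta(X)$ to $V\Delta(X)$ and read off the three transformations by composing with the product projections (so that $\sigma$, $\eta$, $\epsilon$ are uniquely determined). The $\pi_2\times 1$-versus-$1\times\pi_2$ compatibility (the $\alpha$-direction of the morphism) forces the base map and the trailing $U(X)$- and $U(X)^{\BF{m}}$-components of the total map to be independent of the first of the two diagonal inputs, which produces $\eta:U^{\BF{m}}\times U\to U^{\BF{m}}$ and $\epsilon:U^{\BF{m}}\times U\to U$; the free first-coordinate component of the total map gives $\sigma:U^{\BF{m}}\times(U\times U)\to U^{\BF{n}}$. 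A direct computation then shows that the splitting ($\beta$-direction) compatibility of $\overline{\gamma}_X$ holds if and only if $\sigma_X(1\times\langle 1,1\rangle)=\theta_X\eta_X$, which is exactly diagram (\ref{axiom1}). The converse (c) $\Rightarrow$ (b) reverses this: one defines $\overline{\gamma}_{1_X}$ and $\overline{\gamma}_{0_X}$ by the same formulas, checks the two $\Pt{X}$-morphism identities (with (\ref{axiom1}) supplying exactly the $\beta$-compatibility), and checks naturality in $X$, which is inherited from that of $\sigma$, $\eta$ and $\epsilon$.

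The crux is (b) $\Rightarrow$ (a), where $\gamma$ must be recovered on all of $\Pt{A}$ from its restriction to the diagonal. I would use the canonical morphism $m=(\langle 1,\beta\alpha\rangle,\beta):(A,B,\alpha,\beta)\to\Delta(A)$ in $\Pt{A}$; since $\alpha\beta=1_B$ and $\pi_1\langle 1,\beta\alpha\rangle=1_A$, both $U(\beta)$ and $U(\langle 1,\beta\alpha\rangle)$ are split monomorphisms, so $V(m)$ is a split monomorphism in $\Pt{X}$. Naturality then forces $V(m)\,\gamma_{(A,B,\alpha,\beta)}=\overline{\gamma}_A\,W(m)$, which determines $\gamma_{(A,B,\alpha,\beta)}$ uniquely provided the right-hand side factors through $V(m)$; the commutative diagrams of the previous lemma (with the arrows reversed) exhibit precisely this factorization and thereby define $\gamma$. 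I expect the main obstacle to be verifying that the component defined by this forced formula is natural with respect to an \emph{arbitrary} morphism of $\Pt{A}$, and not merely with respect to the chosen $m$: after cancelling the split monos this reduces to the naturality of $\sigma$, $\eta$ and $\epsilon$ together with the identity (\ref{axiom1}), and is what makes the passage between (a) and the diagonal data (b)/(c) a genuine bijection rather than merely a one-sided assignment.
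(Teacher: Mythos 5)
Your proposal is correct and follows essentially the same route as the paper: the equivalence of (b) and (c) by decomposing $\overline{\gamma}_X$ into components via product preservation of $U$ (with the splitting compatibility yielding exactly diagram (\ref{axiom1})), and the recovery of $\gamma$ from $\overline{\gamma}$ via naturality along $(\langle 1,\beta\alpha\rangle,\beta):(A,B,\alpha,\beta)\to\Delta(A)$, which is precisely what the paper's explicit formula for $\psi_{1_{(A,B,\alpha,\beta)}}$ and its commutative diagrams encode. One small correction: $V(\langle 1,\beta\alpha\rangle,\beta)$ need only be claimed to be a monomorphism rather than a split monomorphism (the obvious retraction built from $\pi_1$ and $\alpha$ does not restrict to the pullbacks), but this does not affect your argument since uniqueness needs only monicity and existence is supplied by the explicit factorization.
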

\begin{proof}
For each $(A,B,\alpha,\beta)$ in $\Pt{A}$ and $X$ in $\BB{A}$, let $(\psi_{1_{(A,B,\alpha,\beta)}},\psi_{0_{(A,B,\alpha,\beta)}})=\gamma_{(A,B,\alpha,\beta)}$ and $(\overline{\psi}_{1_{X}},\overline{\psi}_{0_{X}})=\overline{\gamma}_X$.  The diagram
\[
\xymatrix@C=-4 ex{
& P_X\times U(X) \ar[dl]_{\langle U(\pi_1)^\BF{n}\pi_1,\pi_2\rangle \times 1\ \ }\ar@<0.5ex>[ddd]|(0.35){\hole}|(0.4){\hole}|(0.42){\hole}^{\pi_2 \times 1} & & & & & U(X)^\BF{m} \times U(X\times X)\ar[lllll]_{\overline{\psi}_{1_{X}}}\ar[dllll]_{1\times \langle U(\pi_1),U(\pi_2)\rangle\ }\ar@<0.5ex>[ddd]|(0.32){\hole}|(0.37){\hole}|(0.41){\hole}^(0.5){1\times U(\pi_2)}\\
(U(X)^\BF{n} \times U(X)^\BF{m}) \times U(X) \ar@/_2ex/@<0.5ex>[rdd]^{\pi_2\times 1}  & & U(X)^\BF{m}\times (U(X)\times U(X))\ar[ll]^{q_X}\ar@/_2ex/@<0.5ex>[rrrrdd]^{1\times \pi_2} & & & & & \\
\\
& U(X)^\BF{m}\times U(X) \ar@<0.5ex>[uuu]^{\langle U\langle 1,1\rangle^\BF{n}\theta_X,1\rangle \times 1}|(0.58){\hole}|(0.6){\hole}|(0.65){\hole} \ar@/^2ex/@<0.5ex>[luu]^{\langle \theta_X,1\rangle \times 1}& & & & & U(X)^\BF{m}\times U(X),\ar[lllll]^{\overline{\psi}_{0_{X}}}\ar@<0.5ex>[uuu]^(0.5){1\times U\langle 1,1\rangle}|(0.59){\hole}|(0.63){\hole}|(0.68){\hole}\ar@/^2ex/@<0.5ex>[lllluu]^{1\times \langle 1,1\rangle}
}
\]
in which 
\[P_X=U(X\times X)^\BF{n}\pb{U(\pi_2)^n}{\theta_X} U(X)^\BF{m}\] and \[q_X=\langle \langle \sigma_X,\eta_X(1\times \pi_2)\rangle,\epsilon_X(1\times \pi_2)\rangle,\] is a commutative diagram of morphisms in $\Pt{X}$, and shows the relationship between $\overline{\gamma}$ and $\sigma$, $\eta$ and $\epsilon$.  The equations \[\gamma_{\Delta(X)}=\overline{\gamma}_X\]
and
\[\psi_{1_{(A,B,\alpha,\beta)}}=\langle \langle  \sigma_A (U(\beta)^\BF{m}\times U(\langle 1,\beta\alpha\rangle)),\eta_B(1\times U(\alpha))\rangle \epsilon_B (1\times U(\alpha))\rangle, \]
and the commutative diagram
\[
\xymatrix@C=15ex
{
U(B)^\BF{m}\times U(B)  \ar[d]^{U(\beta)^\BF{m}\times U(\beta)} \ar@<0.2ex>@{-}`l[dd] `[dd] [dd]\ar@<-0.2ex>@{-}`l[dd] `[dd] [dd] & U(B)^\BF{m}\times U(B)\ar[l]_{\psi_{0_{(A,B,\alpha)}}}\ar[d]_{U(\beta)^\BF{m}\times U(\beta)} \ar@<0.2ex>@{-}`r[dd] `[dd] [dd]\ar@<-0.2ex>@{-}`r[dd] `[dd] [dd]\\
U(A)^\BF{m}\times U(A) \ar[d]^{U(\alpha)^\BF{m}\times U(\alpha)}& U(A)^\BF{m}\times U(A)\ar[l]_{\psi_{0_{\Delta(A)}}} \ar[d]_{U(\alpha)^\BF{m}\times U(\alpha)} \\
U(B)^\BF{m}\times U(B) \ar@{=}[d]&  U(B)^m\times U(B) \ar[l]_{\psi_{0_{\Delta(B)}}} \ar[dl]^{\langle \eta_B,\epsilon_B \rangle} \\
U(B)^\BF{m}\times U(B) & 
}
\]
show the relationships between $\gamma$ and $\overline{\gamma}$, and $\gamma$ and $\sigma$, $\eta$ and $\epsilon$. 
\end{proof}

From the two lemmas above we easily prove the following corollaries.
\begin{cor}
\label{thm varphi and rho}
Each of the following types of data uniquely determine each other:
\begin{enumerate}[(a)]
\item \label{V to W}
a natural transformation $\tau:V\to W$ with $1_{D_{\BB{X}}}\circ \tau=1_{D_\BB{A}^\BF{m}\times D_\BB{A}}$;
\item \label{generalized addition}
a natural transformation $\rho:(U^\BF{n}\times U^\BF{m}) \times U \to U$ with component at each $X$ in $\BB{C}$ making the diagram
\begin{equation}
\label{generalized addition axiom}
\vcenter{
\xymatrix{
(U(X)^\BF{n}\times U(X)^\BF{m})\times U(X)\ar[r]^(0.7){\rho_X}&U(X)\\
U(X)^\BF{m}\times U(X)\ar[ru]_{\pi_2}\ar[u]^{\langle \theta_{X} ,1 \rangle \times 1}&
}
}
\end{equation}
 commute.
\end{enumerate}
\end{cor}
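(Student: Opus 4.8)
The plan is to read this off the first of the two lemmas above, which already sets up a bijection between natural transformations $\tau : V \to W$ and pairs $(\rho, \zeta)$, where $\rho : (U^{\BF{n}} \times U^{\BF{m}}) \times U \to U$ and $\zeta : U^{\BF{m}} \times U \to U^{\BF{m}}$. So the only work is to pin down which pairs correspond to those $\tau$ that additionally satisfy $1_{D_{\BB{X}}} \circ \tau = 1_{D_{\BB{A}}^{\BF{m}} \times D_{\BB{A}}}$, and to see that this extra requirement collapses the pair $(\rho,\zeta)$ to $\rho$ alone subject to the generalized addition axiom (\ref{generalized addition axiom}).

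First I would translate the side condition. With $\tau_{(A,B,\alpha,\beta)} = (\varphi_{1_{(A,B,\alpha,\beta)}}, \varphi_{0_{(A,B,\alpha,\beta)}})$ as in the proof of that lemma, the whiskered transformation $1_{D_{\BB{X}}} \circ \tau$ is exactly the base component $\varphi_0$, a natural endomorphism of the common functor $D_{\BB{X}}V = D_{\BB{X}}W$; thus the condition asserts $\varphi_{0_{(A,B,\alpha,\beta)}} = 1$ for every object. I would then reduce this to the objects in the image of $\Delta$: applying the naturality of $\varphi_0$ to a morphism $(A,B,\alpha,\beta) \to \Delta(B)$ whose base component is $\alpha\beta = 1_B$ (for instance $(\langle \alpha,\alpha\rangle, 1_B)$), the induced maps on bases are identities, and the naturality square yields $\varphi_{0_{(A,B,\alpha,\beta)}} = \varphi_{0_{\Delta(B)}}$. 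Hence $\varphi_0 = 1$ everywhere is equivalent to $\varphi_{0_{\Delta(X)}} = 1$ for all $X$.

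Next I would substitute the explicit description $\varphi_{0_{\Delta(X)}} = \langle \zeta_X, \rho_X(\langle \theta_X, 1\rangle \times 1)\rangle$ supplied by the same lemma (the bottom triangle of its second diagram). Since a morphism into a binary product equals the identity precisely when its two components are the projections, $\varphi_{0_{\Delta(X)}} = \langle \pi_1, \pi_2 \rangle$ for all $X$ is equivalent to the two conditions $\zeta_X = \pi_1$ and $\rho_X(\langle \theta_X, 1\rangle \times 1) = \pi_2$. The first fixes $\zeta$ as the projection, so it carries no information, and the second is exactly the commutativity of (\ref{generalized addition axiom}). Composing the bijection of the first lemma with this identification, the transformations $\tau$ satisfying the side condition correspond to pairs $(\rho, \pi_1)$ with $\rho$ satisfying the generalized addition axiom, hence to the single datum $\rho$ subject to that axiom; because $\rho$ and $\zeta$ enter the first lemma as independent free data, fixing $\zeta = \pi_1$ imposes no constraint on $\rho$. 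The only point needing care is the reduction along $\Delta$, i.e. checking that the base component $\varphi_0$ is genuinely recovered from its restriction to the objects $\Delta(X)$, but this is immediate from the naturality square already exhibited in the proof of the first lemma, so I anticipate no real obstacle.
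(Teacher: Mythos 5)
Your proposal is correct and follows exactly the route the paper intends: the paper offers no separate argument for this corollary beyond the remark that it follows easily from the preceding lemmas, and your derivation — reducing the side condition $1_{D_{\BB{X}}}\circ\tau=1$ to $\varphi_{0_{\Delta(X)}}=1$ via naturality, then reading off $\zeta_X=\pi_1$ and $\rho_X(\langle\theta_X,1\rangle\times 1)=\pi_2$ from the explicit formula $\varphi_{0_{\Delta(X)}}=\langle\zeta_X,\rho_X(\langle\theta_X,1\rangle\times 1)\rangle$ in the first lemma — is precisely the intended filling-in of that gap. The only cosmetic difference is that you establish $\varphi_{0_{(A,B,\alpha,\beta)}}=\varphi_{0_{\Delta(B)}}$ by naturality along $(\langle\alpha,\alpha\rangle,1_B)$, whereas the lemma's displayed diagram obtains the same identity through $\Delta(A)$ and $\Delta(B)$ using $\beta$ and $\alpha$; both are valid.
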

\begin{cor}
\label{thm psi and sigma}
Each of the following types of data uniquely determine each other:
\begin{enumerate}[(a)]
\item \label{W to V}
a natural transformation $\gamma:W\to V$ with $1_{D_{\BB{X}}}\circ \gamma=1_{D_\BB{A}^\BF{m}\times D_\BB{A}}$;
\item \label{generalized subtractions}
a natural transformation $\sigma: U^\BF{m}\times (U\times U)  \to U^\BF{n}$ with component at each $X$ in $\BB{C}$ making the diagram
\begin{equation}
\label{generalized subtractions axiom}
\vcenter{
\xymatrix{
U(X)^\BF{m} \times (U(X)\times U(X)) \ar[r]^(0.7){\sigma_X} &U(X)^\BF{n}\\
U(X)^\BF{m}\times U(X) \ar[r]_{\pi_1} \ar[u]^{1\times \langle 1,1\rangle}& U(X)^\BF{m} \ar[u]_{\theta_X} 
}
}
\end{equation}
 commute.
\end{enumerate}
\end{cor}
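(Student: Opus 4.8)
The plan is to read Corollary~\ref{thm psi and sigma} off the second of the two lemmas above, which already supplies a bijective correspondence between natural transformations $\gamma:W\to V$ and triples $(\sigma,\eta,\epsilon)$ satisfying the commutativity condition \eqref{axiom1}. The corollary is then exactly the restriction of this correspondence to the sub-collections cut out by the normalization $1_{D_{\BB{X}}}\circ\gamma=1_{D_{\BB{A}}^{\BF{m}}\times D_{\BB{A}}}$ on the one side and by $\eta=\pi_1$, $\epsilon=\pi_2$ on the other.

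First I would unwind the normalization condition. Writing $\gamma_{(A,B,\alpha,\beta)}=(\psi_{1_{(A,B,\alpha,\beta)}},\psi_{0_{(A,B,\alpha,\beta)}})$ and using that $U$ preserves products, the common base of $V$ and $W$ at $(A,B,\alpha,\beta)$ is $U(B)^{\BF{m}}\times U(B)=U\big((D_{\BB{A}}^{\BF{m}}\times D_{\BB{A}})(A,B,\alpha,\beta)\big)$, so $1_{D_{\BB{X}}}\circ\gamma=1_{D_{\BB{A}}^{\BF{m}}\times D_{\BB{A}}}$ says precisely that every base component $\psi_{0_{(A,B,\alpha,\beta)}}$ is an identity morphism. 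The final commutative diagram in the proof of that lemma records $\psi_{0_{\Delta(B)}}=\langle \eta_B,\epsilon_B\rangle$; hence requiring $\psi_{0_{\Delta(B)}}=1$ forces $\eta_B=\pi_1$ and $\epsilon_B=\pi_2$ for each $B$, i.e.\ $\eta$ and $\epsilon$ are the two product projections out of $U^{\BF{m}}\times U$.

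The step I expect to need the most care is checking that imposing the identity only on the base components at objects of the form $\Delta(B)$ already forces it on every base component, so that the normalization really is equivalent to $\eta=\pi_1$, $\epsilon=\pi_2$. For this I would use that both $D_{\BB{X}}V$ and $D_{\BB{X}}W$ factor through $D_{\BB{A}}:\Pt{A}\to\BB{A}$, so that $D_{\BB{X}}\circ\gamma$ is natural in the base object alone. Applying naturality of $\gamma$ to the morphism $(\langle \alpha,\alpha\rangle,1_B):(A,B,\alpha,\beta)\to\Delta(B)$ of $\Pt{A}$, whose image under $D_{\BB{A}}$ is $1_B$, turns both horizontal base maps into identities and yields $\psi_{0_{(A,B,\alpha,\beta)}}=\psi_{0_{\Delta(B)}}$; thus the base component at an arbitrary object is controlled by its value at $\Delta(B)$.

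Finally I would substitute $\eta=\pi_1$ and $\epsilon=\pi_2$ into \eqref{axiom1}: its right-hand composite $\theta_X\circ\eta_X$ becomes $\theta_X\circ\pi_1$, so \eqref{axiom1} collapses to exactly the generalized subtractions axiom \eqref{generalized subtractions axiom}. Hence the normalized $\gamma:W\to V$ correspond, through the second lemma, to triples $(\sigma,\pi_1,\pi_2)$ with $\sigma$ satisfying \eqref{generalized subtractions axiom}, and therefore to such $\sigma$ alone; composing with the lemma's bijection gives the asserted mutual determination. The argument is the exact dual of the proof of Corollary~\ref{thm varphi and rho}, with $\rho$ and \eqref{generalized addition axiom} replaced by $\sigma$ and \eqref{generalized subtractions axiom}.
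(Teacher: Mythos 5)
Your proposal is correct and follows the route the paper itself intends: the paper derives this corollary directly from the second lemma's correspondence $\gamma\leftrightarrow(\sigma,\eta,\epsilon)$, observing (as you do) that the normalization $1_{D_{\BB{X}}}\circ\gamma=1_{D_{\BB{A}}^{\BF{m}}\times D_{\BB{A}}}$ forces $\langle\eta_B,\epsilon_B\rangle=1$, i.e.\ $\eta=\pi_1$ and $\epsilon=\pi_2$, whereupon \eqref{axiom1} collapses to \eqref{generalized subtractions axiom}. Your extra check that the base components at arbitrary $(A,B,\alpha,\beta)$ agree with those at $\Delta(B)$ is already recorded in the outer rectangle of the lemma's final commutative diagram (via the splitting $U(\alpha)^{\BF{m}}\times U(\alpha)$ of $U(\beta)^{\BF{m}}\times U(\beta)$), so nothing is missing.
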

\begin{cor}
\label{protomodular cor}
Each of the following types of data uniquely determine each other:
\begin{enumerate}[(a)]
\item natural transformations $\tau:V\to W$ and $\gamma:W\to V$ with $1_{D_{\BB{X}}}\circ \tau=1_{D_\BB{A}^\BF{m}\times D_\BB{A}}$ and $1_{D_{\BB{X}}}\circ \gamma=1_{D_\BB{A}^\BF{m}\times D_\BB{A}}$ and
such that $\tau\gamma=1_W$;
\item natural transformations $\rho:(U^\BF{n}\times U^\BF{m}) \times U \to U$ and $\sigma :U^\BF{m}\times (U\times U) \to U^\BF{n}$ with components at each $X$ in $\BB{C}$ making the diagrams (\ref{generalized addition axiom}), (\ref{generalized subtractions axiom}) and 
\begin{equation}
\label{generalized protomodular condition}
\vcenter{
\xymatrix{
U(X)^\BF{m}\times(U(X)\times U(X))\ar[dr]^{\pi_1\pi_2}\ar[d]_{\langle \langle \sigma,\pi_1\rangle, \pi_2\pi_2\rangle}&\\
(U(X)^\BF{n}\times U(X)^\BF{m})\times U(X) \ar[r]_(0.7){\rho_X} & U(X)
}
}
\end{equation}
commute.
\end{enumerate}
\end{cor}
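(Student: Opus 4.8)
The statement is the conjunction of Corollaries~\ref{thm varphi and rho} and~\ref{thm psi and sigma} together with one further equivalence. By Corollary~\ref{thm varphi and rho}, the natural transformations $\tau:V\to W$ with $1_{D_{\BB{X}}}\circ\tau=1_{D_\BB{A}^\BF{m}\times D_\BB{A}}$ correspond bijectively to the $\rho$ making~(\ref{generalized addition axiom}) commute; by Corollary~\ref{thm psi and sigma}, the $\gamma:W\to V$ with $1_{D_{\BB{X}}}\circ\gamma=1_{D_\BB{A}^\BF{m}\times D_\BB{A}}$ correspond bijectively to the $\sigma$ making~(\ref{generalized subtractions axiom}) commute. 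Hence the pairs $(\tau,\gamma)$ and $(\rho,\sigma)$ are already in bijection, and the plan is to show that, under this bijection, the single remaining hypothesis $\tau\gamma=1_W$ holds if and only if diagram~(\ref{generalized protomodular condition}) commutes.

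To analyse $\tau\gamma=1_W$ I would first reduce it to the generic objects $\Delta(X)$. Since the two $D_{\BB{X}}$-conditions make the base components of $\tau$ and of $\gamma$ identities (equivalently $\zeta_X=\pi_1$ and $\eta_X=\pi_1,\ \epsilon_X=\pi_2$ in the notation of the two lemmas above), both $\tau\gamma$ and $1_W$ are natural transformations $W\to W$ that are the identity on base objects. For such transformations, naturality with respect to the canonical morphism $(\langle 1,\beta\alpha\rangle,\beta):(A,B,\alpha,\beta)\to\Delta(A)$ of $\Pt{A}$ recovers the component at $(A,B,\alpha,\beta)$ from the component at $\Delta(A)$, because its image $U(\beta)^\BF{m}\times U(\langle 1,\beta\alpha\rangle)$ under $W$ is a split monomorphism and hence left-cancellable; this is exactly the content of the first large diagram in the proof of the first lemma. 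Consequently $\tau\gamma=1_W$ is equivalent to $\overline{\varphi}_{1_X}\,\overline{\psi}_{1_X}=1$ for every $X$, where $(\overline{\varphi}_{1_X},\overline{\varphi}_{0_X})=\tau_{\Delta(X)}$ and $(\overline{\psi}_{1_X},\overline{\psi}_{0_X})=\gamma_{\Delta(X)}$.

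Finally I would transport this identity through the two squares of the lemmas. Both comparison maps $\langle U(\pi_1)^\BF{n}\pi_1,\pi_2\rangle\times 1$ and $1\times\langle U(\pi_1),U(\pi_2)\rangle$ are isomorphisms, since $U$ preserves products and so the pullback $P_X$ collapses along $\theta_X$; calling them $i_1$ and $i_2$, the two squares read $\overline{\varphi}_{1_X}=i_2^{-1}p_X i_1$ and $\overline{\psi}_{1_X}=i_1^{-1}q_X i_2$, so that $\overline{\varphi}_{1_X}\,\overline{\psi}_{1_X}=i_2^{-1}(p_X q_X)i_2$ and the condition becomes simply $p_X q_X=1$. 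Using $\zeta_X=\pi_1$ and simplifying $p_X$ by~(\ref{generalized addition axiom}) gives $p_X=\langle\pi_2\pi_1,\langle\rho_X,\pi_2\rangle\rangle$, while $\eta_X=\pi_1,\ \epsilon_X=\pi_2$ give $q_X=\langle\langle\sigma_X,\pi_1\rangle,\pi_2\pi_2\rangle$; a short computation then shows that two of the three components of $p_X q_X$ are identities automatically and that the third equals $\rho_X\langle\langle\sigma_X,\pi_1\rangle,\pi_2\pi_2\rangle$, so that $p_X q_X=1$ is precisely the commutativity $\rho_X\langle\langle\sigma_X,\pi_1\rangle,\pi_2\pi_2\rangle=\pi_1\pi_2$ of~(\ref{generalized protomodular condition}). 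I expect the main obstacle to be organisational rather than conceptual: keeping the coordinates straight through the two large diagrams, and checking carefully that the reduction to $\Delta(X)$ is legitimate, that is, that the split monomorphisms built from $U(\beta)$ and $U(\langle 1,\beta\alpha\rangle)$ together with naturality really do pin down the general component from its value at $\Delta(A)$, exactly as in the proofs of the two lemmas above.
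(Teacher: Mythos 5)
Your proposal is correct and follows essentially the route the paper intends: the paper derives Corollary \ref{protomodular cor} directly from the two lemmas (via Corollaries \ref{thm varphi and rho} and \ref{thm psi and sigma}), and your reduction of $\tau\gamma=1_W$ to the components at $\Delta(X)$, followed by the computation $p_Xq_X=1 \Leftrightarrow \rho_X\langle\langle\sigma_X,\pi_1\rangle,\pi_2\pi_2\rangle=\pi_1\pi_2$, is exactly the omitted verification. The simplifications $\zeta_X=\pi_1$, $\eta_X=\pi_1$, $\epsilon_X=\pi_2$ and $p_X=\langle\pi_2\pi_1,\langle\rho_X,\pi_2\rangle\rangle$, $q_X=\langle\langle\sigma_X,\pi_1\rangle,\pi_2\pi_2\rangle$ all check out.
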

\begin{cor}
\label{strange condition cor}
Each of the following types of data uniquely determine each other:
\begin{enumerate}[(a)]
\item natural transformations $\tau: V\to W$ and $\gamma:W\to V$ with $1_{D_{\BB{X}}}\circ \tau=1_{D_\BB{A}^\BF{m}\times D_\BB{A}}$ and $1_{D_{\BB{X}}}\circ \gamma=1_{D_\BB{A}^\BF{m}\times D_\BB{A}}$ and
such that $\gamma\tau=1_V$;
\item natural transformations $\rho:(U^\BF{n}\times U^\BF{m}) \times U \to U$ and $\sigma:U^\BF{m}\times (U\times U)  \to U^\BF{n}$ with components at each $X$ in $\BB{C}$ making the diagrams (\ref{generalized addition axiom}), (\ref{generalized subtractions axiom}) and 
\begin{equation}
\label{strange condition}
\vcenter{
\xymatrix{
(U(X)^\BF{n}\times U(X)^\BF{m})\times U(X)\ar[d]_{\langle \pi_2\pi_1,\langle \rho_X,\pi_2\rangle \rangle}\ar[dr]^{\pi_1\pi_1}\\
U(X)^\BF{m}\times (U(X)\times U(X)) \ar[r]_{\sigma_X}& U(X)^\BF{n}
}
}
\end{equation}
commute.
\end{enumerate}
\end{cor}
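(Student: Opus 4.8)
The plan is to reduce everything to the two preceding corollaries together with a single composition identity, in complete parallel with Corollary~\ref{protomodular cor} but with the two composites interchanged. By Corollary~\ref{thm varphi and rho} the normalized natural transformations $\tau:V\to W$ correspond bijectively to natural transformations $\rho$ satisfying~(\ref{generalized addition axiom}), and by Corollary~\ref{thm psi and sigma} the normalized $\gamma:W\to V$ correspond bijectively to $\sigma$ satisfying~(\ref{generalized subtractions axiom}). Hence the pair $(\tau,\gamma)$ already determines, and is determined by, the pair $(\rho,\sigma)$ subject to those two axioms; the only thing left to prove is that under this correspondence the extra requirement $\gamma\tau=1_V$ is equivalent to the commutativity of~(\ref{strange condition}).

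First I would record what the normalization forces. Inspecting the last square in the proof of the first lemma, the condition $1_{D_{\BB{X}}}\circ\tau=1_{D_\BB{A}^\BF{m}\times D_\BB{A}}$ gives $\varphi_{0_{\Delta(X)}}=\langle\zeta_X,\rho_X(\langle\theta_X,1\rangle\times1)\rangle=1$, so $\zeta_X=\pi_1$ together with axiom~(\ref{generalized addition axiom}); dually, the last square in the proof of the second lemma gives $\psi_{0_{\Delta(X)}}=\langle\eta_X,\epsilon_X\rangle=1$, so $\eta_X=\pi_1$ and $\epsilon_X=\pi_2$, and then~(\ref{axiom1}) becomes axiom~(\ref{generalized subtractions axiom}). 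With these substitutions the maps $p_X$ and $q_X$ of the two lemmas depend only on $\rho_X$ and $\sigma_X$, and the reconstruction diagrams there express $\overline{\varphi}_{1_X}$ and $\overline{\psi}_{1_X}$ purely in terms of $\rho$ and $\sigma$.

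Next, since $\tau$ and $\gamma$ are both the identity on the base functor $D_{\BB{X}}$, so is $\gamma\tau$; thus $\gamma\tau=1_V$ is equivalent to the vanishing of its total-space component. Because the transition diagrams in the two lemmas express the general components $\varphi_{1_{(A,B,\alpha,\beta)}}$ and $\psi_{1_{(A,B,\alpha,\beta)}}$ in terms of the restricted ones $\overline{\varphi}_{1_A}$ and $\overline{\psi}_{1_A}$, it suffices to verify the identity on the objects $\Delta(X)$, that is, $\overline{\psi}_{1_X}\,\overline{\varphi}_{1_X}=1_{P_X\times U(X)}$. Tracing an element $((w,b),c)$ of $P_X\times U(X)$ through the two reconstruction diagrams — using~(\ref{generalized addition axiom}) to collapse the $\rho_X(\langle\theta_X\pi_2,\pi_2\rangle\times1)$ term to $c$, and using the membership relation $U(\pi_2)^{\BF{n}}(w)=\theta_X(b)$ defining the pullback $P_X$ — I expect the composite to return $((w',b),c)$ with $U(\pi_1)^{\BF{n}}(w')=\sigma_X(b,\rho_X(U(\pi_1)^{\BF{n}}(w),b,c),c)$ and $U(\pi_2)^{\BF{n}}(w')=\theta_X(b)$. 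Since $U(X\times X)^{\BF{n}}\cong U(X)^{\BF{n}}\times U(X)^{\BF{n}}$ under $(U(\pi_1),U(\pi_2))$, the second coordinate already matches $w$, so $\overline{\psi}_{1_X}\,\overline{\varphi}_{1_X}=1$ reduces to $\sigma_X(b,\rho_X(a,b,c),c)=a$, where, as $(w,b)$ ranges over $P_X$ and $c$ over $U(X)$, the triple $(a,b,c)=(U(\pi_1)^{\BF{n}}(w),b,c)$ ranges over all of $U(X)^{\BF{n}}\times U(X)^{\BF{m}}\times U(X)$; this is precisely~(\ref{strange condition}). Reading the chain of equivalences backwards yields the converse.

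The main obstacle I anticipate is organizational rather than conceptual: keeping track, through the two large reconstruction diagrams, of the several auxiliary components ($\zeta$ on the $\tau$ side, $\eta$ and $\epsilon$ on the $\gamma$ side) and checking that the normalization collapses each of them to a projection, so that only the $\rho$-then-$\sigma$ composite survives. The one genuinely substantive point is confirming that the pullback constraint defining $P_X$ supplies exactly the matching of the $U(\pi_2)^{\BF{n}}$-coordinate, so that the single surviving equation is the $U(\pi_1)^{\BF{n}}$-identity~(\ref{strange condition}) and nothing more.
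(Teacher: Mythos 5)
Your proposal is correct and follows essentially the route the paper intends: the paper offers no explicit proof of this corollary, stating only that it follows easily from the two lemmas, and your argument is precisely the fleshed-out version of that — reduce to Corollaries \ref{thm varphi and rho} and \ref{thm psi and sigma} via the normalization (which forces $\zeta,\eta,\epsilon$ to be projections), then translate $\gamma\tau=1_V$ into the identity (\ref{strange condition}) by restricting along $\Delta$ and chasing elements through $p_X$ and $q_X$. The element computation and the observation that $(w,b)\mapsto(U(\pi_1)^{\BF{n}}(w),b)$ identifies $P_X$ with $U(X)^{\BF{n}}\times U(X)^{\BF{m}}$ are both accurate.
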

\begin{cor}
\label{protomodular + strange condition cor}
Each of the following types of data uniquely determine each other:
\begin{enumerate}[(a)]
\item natural transformations $\tau:V\to W$ and $\sigma:W\to V$ with $1_{D_{\BB{X}}}\circ \tau=1_{D_\BB{A}^\BF{m}\times D_\BB{A}}$ and $1_{D_{\BB{X}}}\circ \gamma=1_{D_\BB{A}^\BF{m}\times D_\BB{A}}$ and inverse to each other;
\item natural transformations $\rho:(U^\BF{n}\times U^\BF{m}) \times U \to U$ and $\sigma:U^\BF{m}\times (U\times U)  \to U^\BF{n}$ with components at each $X$ in $\BB{C}$ making the diagrams (\ref{generalized addition axiom}), (\ref{generalized subtractions axiom}), (\ref{generalized protomodular condition}) and (\ref{strange condition})
commute.
\end{enumerate}

\end{cor}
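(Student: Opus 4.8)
The plan is to observe that this corollary is nothing more than the simultaneous combination of Corollaries \ref{protomodular cor} and \ref{strange condition cor}, so that essentially no new computation is required; one need only verify that the two correspondences established there rest on one and the same underlying bijection of data.

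First I would record that requiring $\tau:V\to W$ and $\gamma:W\to V$ to be inverse to each other means exactly that both $\tau\gamma=1_W$ and $\gamma\tau=1_V$ hold. By Corollary \ref{thm varphi and rho}, a natural transformation $\tau$ satisfying $1_{D_\BB{X}}\circ\tau=1_{D_\BB{A}^\BF{m}\times D_\BB{A}}$ corresponds bijectively to a natural transformation $\rho$ making diagram (\ref{generalized addition axiom}) commute; dually, by Corollary \ref{thm psi and sigma}, a natural transformation $\gamma$ with the analogous compatibility condition corresponds bijectively to a $\sigma$ making (\ref{generalized subtractions axiom}) commute. Since these two assignments are independent, the pair $(\tau,\gamma)$ corresponds bijectively to a pair $(\rho,\sigma)$ subject to (\ref{generalized addition axiom}) and (\ref{generalized subtractions axiom}).

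Next I would invoke the two preceding corollaries to translate the remaining two equations under this same bijection. Corollary \ref{protomodular cor} shows that the condition $\tau\gamma=1_W$ is equivalent to the commutativity of diagram (\ref{generalized protomodular condition}), while Corollary \ref{strange condition cor} shows that $\gamma\tau=1_V$ is equivalent to the commutativity of (\ref{strange condition}). Imposing both equations at once therefore yields precisely the four commuting diagrams (\ref{generalized addition axiom}), (\ref{generalized subtractions axiom}), (\ref{generalized protomodular condition}) and (\ref{strange condition}), and conversely a pair $(\rho,\sigma)$ satisfying all four recovers mutually inverse $\tau$ and $\gamma$.

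The only point needing care — and the closest thing to an obstacle — is to check that the assignment $(\tau,\gamma)\mapsto(\rho,\sigma)$ used in Corollary \ref{protomodular cor} coincides with the one used in Corollary \ref{strange condition cor}, so that the two extra conditions may be imposed coherently on a single pair $(\rho,\sigma)$. This is immediate, since both corollaries are obtained from the very same bijections of Corollaries \ref{thm varphi and rho} and \ref{thm psi and sigma}. Hence the data in (a) and (b) uniquely determine one another, which completes the proof.
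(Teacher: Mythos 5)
Your proposal is correct and matches the paper's intent exactly: the paper offers no explicit proof, stating only that the corollaries follow easily from the two lemmas, and the intended argument is precisely your combination of Corollaries \ref{protomodular cor} and \ref{strange condition cor} (i.e.\ ``inverse to each other'' means both $\tau\gamma=1_W$ and $\gamma\tau=1_V$, each translating to one of the two extra diagrams under the common bijection from Corollaries \ref{thm varphi and rho} and \ref{thm psi and sigma}). Your remark that the two preceding corollaries rest on the same underlying correspondence $(\tau,\gamma)\leftrightarrow(\rho,\sigma)$ is the one point worth making explicit, and you made it.
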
\ \\
We now consider the case where $\BF{m}=\emptyset$ and $\BF{n}=\{1\}$, the results proved here will be used in Section \ref{pointed varieties}.\\

When $\BF{m}=\emptyset$ and $\BF{n}=\{1\}$, the functors $V$ and $W$ are up to isomorphism the functors $\ti{V},\ti{W} : \Pt{A}\to\Pt{X}$ sending $(A,B,\alpha,\beta)$ to 
\[
((U(A)\pb{U(\alpha)}{\theta_B}1)\times U(B),U(B),\pi_2,\langle \langle \theta_A,1\rangle!_{U(B)} , 1 \rangle)
\]
and
\[
(U(A),U(B),U(\alpha),U(\beta))
\]
respectively.
\begin{cor}
\label{thm varphi and rho m=0 and n=1}
Each of the following types of data uniquely determine each other:
\begin{enumerate}[(a)]
\item \label{V to W m=0 and n=1}
a natural transformation $\tau:\ti{V}\to \ti{W}$ with $1_{D_{\BB{X}}}\circ \tau=1_{D_{\BB{A}}}$ and with component at each $(A,B,\alpha,\beta)$ in $\Pt{A}$
such that the diagram
\begin{equation}
\label{kernel identity varphi}
\vcenter{
\xymatrix{
U(A)\pb{U(\alpha)}{\theta_B} 1 \ar[r]^{\langle 1,\theta_B!\rangle} \ar@{=}[d]& (U(A)\pb{U(\alpha)}{\theta_B} 1)\times U(B)\ar[d]^{\varphi_{1_{(A,B,\alpha,\beta)}}} \\
U(A)\pb{U(\alpha)}{\theta_B} 1 \ar[r]_{\pi_1} & U(A)
}
}
\end{equation}
commutes;
\item \label{generalized addition  m=0 and n=1}
a natural transformation $\rho:U \times U \to U$ with component at each $X$ in $\BB{A}$ making the diagram
\begin{equation}
\label{addition axiom}
\vcenter{
\xymatrix{
U(X)\ar[rd]^{1_{U(X)}}\ar[d]_{\langle 1,\theta_{X}!\rangle} & \\
U(X)\times U(X)\ar[r]^(0.7){\rho_X}&U(X)\\
U(X)\ar[ru]_{1_{U(X)}}\ar[u]^{\langle \theta_{X}!,1\rangle}&
}
}
\end{equation}
 commute.
\end{enumerate}
\end{cor}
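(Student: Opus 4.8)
The plan is to derive the statement from Corollary~\ref{thm varphi and rho} by specialising to $\BF{m}=\emptyset$ and $\BF{n}=\{1\}$ and then isolating the single extra condition imposed on each side. Since $U$ preserves finite products one has $U^{\BF{m}}\cong 1$ and $U^{\BF{n}}\cong U$, hence $(U^{\BF{n}}\times U^{\BF{m}})\times U\cong U\times U$; moreover $V\cong\ti{V}$ and $W\cong\ti{W}$ as noted just before the statement, and the side condition $1_{D_{\BB{X}}}\circ\tau=1_{D_{\BB{A}}^{\BF{m}}\times D_{\BB{A}}}$ reduces to $1_{D_{\BB{X}}}\circ\tau=1_{D_{\BB{A}}}$. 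Under these identifications axiom~(\ref{generalized addition axiom}) of Corollary~\ref{thm varphi and rho} becomes exactly the lower triangle $\rho_X\langle\theta_X!,1\rangle=1_{U(X)}$ of~(\ref{addition axiom}). Thus Corollary~\ref{thm varphi and rho} already furnishes a bijection between natural transformations $\tau:\ti{V}\to\ti{W}$ with $1_{D_{\BB{X}}}\circ\tau=1_{D_{\BB{A}}}$ and natural transformations $\rho:U\times U\to U$ satisfying the lower triangle, and this correspondence presents the component $\varphi_{1_{(A,B,\alpha,\beta)}}$ as $\rho_A\langle\pi_1\pi_1,U(\beta)\pi_2\rangle$ (that is, $\varphi_{1_{(A,B,\alpha,\beta)}}(k,b)=\rho_A(\pi_1 k,U(\beta)b)$, with $\pi_1 k$ the pullback projection into $U(A)$). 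It therefore remains only to show that, under this bijection, diagram~(\ref{kernel identity varphi}) holds for every object of $\Pt{A}$ if and only if $\rho$ in addition satisfies the upper triangle $\rho_X\langle 1,\theta_X!\rangle=1_{U(X)}$ of~(\ref{addition axiom}).

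One implication is a direct computation. Assuming the upper triangle, precomposing $\varphi_{1_{(A,B,\alpha,\beta)}}$ with $\langle 1,\theta_B!\rangle$ yields $\rho_A\langle\pi_1,U(\beta)\theta_B!\rangle$; naturality of $\theta$ along $\beta:B\to A$ (where $U^{\BF{m}}(\beta)=1_1$ and $U^{\BF{n}}(\beta)=U(\beta)$) gives $U(\beta)\theta_B=\theta_A$, so this equals $\rho_A\langle 1,\theta_A!\rangle\,\pi_1=\pi_1$ by the upper triangle at $A$. This is precisely the commutativity of~(\ref{kernel identity varphi}).

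The converse contains the only genuine difficulty. Reading off~(\ref{kernel identity varphi}) with the formula for $\varphi_1$ and the identity $U(\beta)\theta_B=\theta_A$ produces $\rho_A(a,\theta_A)=a$, but a priori only for $a$ lying in the image of the pullback projection $\pi_1:U(A)\pb{U(\alpha)}{\theta_B}1\to U(A)$, i.e. for $a$ with $U(\alpha)(a)=\theta_B$. The obstacle is to promote this kernel-restricted identity to the full upper triangle, valid on all of $U(X)$. I would resolve it by instantiating~(\ref{kernel identity varphi}) at the object $\Delta(X)=(X\times X,X,\pi_2,\langle 1,1\rangle)$ of $\Pt{A}$, for which the pullback $U(X\times X)\pb{U(\pi_2)}{\theta_X}1$ has image $\{(x,\theta_X):x\in U(X)\}$ under $\pi_1$ (here $\theta_{X\times X}=(\theta_X,\theta_X)$, once more by naturality of $\theta$ and preservation of products). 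This gives $\rho_{X\times X}((x,\theta_X),\theta_{X\times X})=(x,\theta_X)$ for every $x\in U(X)$. Finally I would invoke naturality of $\rho$ along the first product projection $p_1:X\times X\to X$ and postcompose with $U(p_1)$: the square $U(p_1)\,\rho_{X\times X}=\rho_X\,(U(p_1)\times U(p_1))$, evaluated at $((x,\theta_X),(\theta_X,\theta_X))$, collapses the left side to $x$ and the right side to $\rho_X(x,\theta_X)$, whence $\rho_X(x,\theta_X)=x$ for all $x\in U(X)$, the upper triangle. Together with the easy implication and the bijection of Corollary~\ref{thm varphi and rho}, this establishes the stated mutual determination.
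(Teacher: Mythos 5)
Your proof is correct and follows essentially the route the paper intends (the paper states this corollary as an easy consequence of the two lemmas without writing out details): you specialise Corollary~\ref{thm varphi and rho} to $\BF{m}=\emptyset$, $\BF{n}=\{1\}$ and match the extra kernel-identity condition~(\ref{kernel identity varphi}) with the extra triangle of~(\ref{addition axiom}). Your key step for the converse --- instantiating~(\ref{kernel identity varphi}) at $\Delta(X)$, where the pullback projection is surjective onto $\{(x,\theta_X)\}$, and then using naturality of $\rho$ along the first projection --- is precisely the reduction to the diagonal that the paper's lemmas encode via $\overline{\tau}=\tau\Delta$.
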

\begin{cor}
\label{thm psi and sigma m=0 and n=1}
Each of the following types of data uniquely determine each other:
\begin{enumerate}[(a)]
\item \label{W to V m=0 and n=1}
a natural transformation $\gamma:\ti{W}\to \ti{V}$ with $1_{D_{\BB{X}}}\circ \gamma=1_{D_{\BB{A}}}$ and with component at each $(A,B,\alpha,\beta)$ in $\Pt{A}$ 
such that the diagram
\begin{equation}
\label{kernel identity psi}
\vcenter{
\xymatrix{
U(A)\pb{U(\alpha)}{\theta_B} 1 \ar[r]^{\langle 1,\theta_B!\rangle} \ar@{=}[d]& (U(A)\pb{U(\alpha)}{\theta_B} 1)\times U(B)\ar@{<-}[d]^{\psi_{1_{(A,B,\alpha,\beta)}}} \\
U(A)\pb{U(\alpha)}{\theta_B} 1 \ar[r]_{\pi_1} & U(A)
}
}
\end{equation}
commutes;
\item \label{generalized subtractions m=0 and n=1}
a natural transformation $\sigma: U\times U  \to U$ with component at each $X$ in $\BB{A}$ making the diagram
\begin{equation}
\label{subtraction axiom}
\vcenter{
\xymatrix{
U(X)\ar[dr]^{1_{U(X)}}\ar[d]_{\langle 1,\theta_{X}!\rangle} &\\
U(X)\times U(X) \ar[r]_(0.6){\sigma_X} &U(X)\\
U(X) \ar[r]_{!_{U(X)}} \ar[u]^{ \langle 1,1\rangle}& 1 \ar[u]_{\theta_X} 
}
}
\end{equation}
 commute.
\end{enumerate}
\end{cor}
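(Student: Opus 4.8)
The plan is to deduce the statement from Corollary~\ref{thm psi and sigma}, specialized to $\BF{m}=\emptyset$ and $\BF{n}=\{1\}$, and then to reconcile the single extra hypothesis that the present statement adds to each of (a) and (b). Under the canonical identifications $U^{\BF{m}}\cong 1$ and $U^{\BF{n}}\cong U$ we have $W\cong\ti{W}$ and $V\cong\ti{V}$, so a natural transformation $\gamma\colon\ti{W}\to\ti{V}$ with $1_{D_{\BB{X}}}\circ\gamma=1_{D_{\BB{A}}}$ transports along these isomorphisms to one $W\to V$ satisfying the base condition of Corollary~\ref{thm psi and sigma}, and conversely. That corollary matches such data with a natural transformation $\sigma\colon U^{\BF{m}}\times(U\times U)\to U^{\BF{n}}$ subject to diagram~(\ref{generalized subtractions axiom}); specializing the indices turns $\sigma$ into a map $U\times U\to U$ and turns~(\ref{generalized subtractions axiom}) into exactly the lower triangle $\sigma_X\langle 1,1\rangle=\theta_X!_{U(X)}$ of~(\ref{subtraction axiom}). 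It therefore remains only to show that, under this correspondence, commutativity of~(\ref{kernel identity psi}) matches the remaining upper triangle $\sigma_X\langle 1,\theta_X!\rangle=1_{U(X)}$ of~(\ref{subtraction axiom}); once this is done the bijective mutual determination is inherited from Corollary~\ref{thm psi and sigma}.

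First I would record the shape of $\psi_1$ produced by the second lemma above in this case. The base condition pins the auxiliary data $\eta,\epsilon$ of that lemma to the projections, so the formula for $\psi_{1_{(A,B,\alpha,\beta)}}$ collapses to $\langle\langle\lambda_{(A,B,\alpha,\beta)},!\rangle,U(\alpha)\rangle$, where $\lambda_{(A,B,\alpha,\beta)}=\sigma_A\,U(\langle 1,\beta\alpha\rangle)\colon U(A)\to U(A)$ (using that $U$ preserves products to read $U(\langle 1,\beta\alpha\rangle)$ as $\langle 1,U(\beta)U(\alpha)\rangle$). That $\langle\lambda,!\rangle$ really factors through $U(A)\pb{U(\alpha)}{\theta_B}1$ follows from naturality of $\sigma$, $\alpha\beta=1_B$, and the lower triangle, which together give $U(\alpha)\lambda=\theta_B!$.

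For the implication (b)$\Rightarrow$(a) I would verify~(\ref{kernel identity psi}) directly. Along the kernel projection $\pi_1\colon U(A)\pb{U(\alpha)}{\theta_B}1\to U(A)$ one has $U(\alpha)\pi_1=\theta_B!$, hence $U(\beta)U(\alpha)\pi_1=\theta_A!$ by naturality of $\theta$, and therefore $\lambda\pi_1=\sigma_A\langle 1,\theta_A!\rangle\pi_1=\pi_1$ by the upper triangle; this is precisely what~(\ref{kernel identity psi}) asserts. For the converse (a)$\Rightarrow$(b) I would recover $\sigma$ from $\gamma$ and test the upper triangle on the diagonal objects $\Delta(X)=(X\times X,X,\pi_2,\langle 1,1\rangle)$, whose kernel is the first-coordinate copy of $U(X)$ inside $U(X\times X)$. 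Evaluating~(\ref{kernel identity psi}) at $\Delta(X)$, and using naturality of $\sigma$ so that $\sigma_{X\times X}$ acts coordinatewise as $\sigma_X$ (together with the already-established lower triangle), the identity reduces to $\sigma_X\langle 1,\theta_X!\rangle=1_{U(X)}$.

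The step that needs genuine care is this last reduction. A priori~(\ref{kernel identity psi}) constrains $\lambda$ only on elements already lying in a kernel, whereas the upper triangle constrains $\sigma_X$ on all of $U(X)$; the diagonal objects $\Delta(X)$ are exactly the device exhibiting an arbitrary element of $U(X)$ as the first coordinate of a kernel element, and coordinatewise naturality of $\sigma$ is what allows $\sigma_X$ to be read off at an unrestricted second argument. With this in hand, all remaining verifications are the routine diagram chases already packaged in the second lemma above and in Corollary~\ref{thm psi and sigma}.
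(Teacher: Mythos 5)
Your proof is correct and follows essentially the route the paper intends: the paper offers no explicit argument beyond ``from the two lemmas above we easily prove the following corollaries,'' and your proposal is precisely that deduction carried out in detail, specializing Corollary~\ref{thm psi and sigma} to $\BF{m}=\emptyset$, $\BF{n}=\{1\}$ and then matching the extra kernel condition~(\ref{kernel identity psi}) with the upper triangle of~(\ref{subtraction axiom}). Your identification of the evaluation at $\Delta(X)$ as the step that upgrades a constraint on kernel elements to a constraint on all of $U(X)$ is exactly the right point to isolate, and your verification of both directions is sound.
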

\begin{cor}
\label{thm right omega loop1}
Each of the following types of data uniquely determine each other:
\begin{enumerate}[(a)]
\item 
natural transformations $\tau:\ti{V}\to \ti{W}$ and $\gamma:\ti{W}\to \ti{V}$ with $1_{D_{\BB{X}}}\circ \tau=1_{D_{\BB{A}}}$ and $1_{D_{\BB{X}}}\circ \gamma=1_{D_{\BB{A}}}$ inverse to each other and with components at each $(A,B,\alpha,\beta)$ in $\Pt{A}$ 
making the diagrams (\ref{kernel identity varphi}) and (\ref{kernel identity psi}) commute;
\item 
natural transformations $\rho:U \times U \to U$ and $\sigma: U\times U  \to U$ with component at each $X$ in $\BB{A}$ making the diagrams (\ref{addition axiom}), (\ref{subtraction axiom}), 
\begin{equation}
\vcenter{
\xymatrix{
U(X)\times U(X) \ar[d]_{\langle \sigma_X,\pi_2\rangle} \ar[dr]^{\pi_1} & \\
U(X)\times U(X) \ar[r]_{\rho_X} & U(X)
}
}
\end{equation}
and
\begin{equation}
\vcenter{
\xymatrix{
U(X)\times U(X) \ar[d]_{\langle \rho_X,\pi_2\rangle} \ar[dr]^{\pi_1} & \\
U(X)\times U(X) \ar[r]_{\sigma_X} & U(X)
}
}
\end{equation}
 commute.
\end{enumerate}
\end{cor}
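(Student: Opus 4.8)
The plan is to reduce the statement to Corollaries \ref{thm varphi and rho m=0 and n=1} and \ref{thm psi and sigma m=0 and n=1}. Those already give a bijection between natural transformations $\tau:\ti{V}\to\ti{W}$ with $1_{D_{\BB{X}}}\circ\tau=1_{D_{\BB{A}}}$ satisfying (\ref{kernel identity varphi}) and natural transformations $\rho:U\times U\to U$ satisfying (\ref{addition axiom}), and dually a bijection between $\gamma:\ti{W}\to\ti{V}$ satisfying (\ref{kernel identity psi}) and $\sigma:U\times U\to U$ satisfying (\ref{subtraction axiom}). Since the only extra hypothesis in (a) is that $\tau$ and $\gamma$ be mutually inverse, it suffices to show that under these correspondences $\tau\gamma=1_{\ti{W}}$ is equivalent to the commutativity (for all $X$) of the triangle $\rho_X\langle\sigma_X,\pi_2\rangle=\pi_1$, and $\gamma\tau=1_{\ti{V}}$ is equivalent to $\sigma_X\langle\rho_X,\pi_2\rangle=\pi_1$. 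Once this is done the asserted bijection follows formally, as on each side one is merely adjoining equivalent conditions to an existing bijection.

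First I would record the fibre components. By $1_{D_{\BB{X}}}\circ\tau=1_{D_{\BB{A}}}$ and $1_{D_{\BB{X}}}\circ\gamma=1_{D_{\BB{A}}}$ the base components of $\tau$ and $\gamma$ are identities, so only the fibre components matter. Reading them off from the proofs of the two cited corollaries, the component of $\tau$ at $(A,B,\alpha,\beta)$ is $\varphi_1(k,b)=\rho_A(k,U(\beta)(b))$, where $k$ is identified with its image in $U(A)$ under the pullback projection, and the component of $\gamma$ is $\psi_1(a)=(\sigma_A(a,U(\beta)U(\alpha)(a)),U(\alpha)(a))$, whose first coordinate lies in $U(A)\pb{U(\alpha)}{\theta_B}1$ because naturality of $\sigma$ together with (\ref{subtraction axiom}) forces $U(\alpha)\sigma_A(a,U(\beta)U(\alpha)(a))=\theta_B$.

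For the forward implications, unravelling $\varphi_1\psi_1$ gives $a\mapsto\rho_A(\sigma_A(a,U(\beta)U(\alpha)(a)),U(\beta)U(\alpha)(a))$, which equals $a$ by the triangle $\rho_X\langle\sigma_X,\pi_2\rangle=\pi_1$ applied at $X=A$; hence $\tau\gamma=1_{\ti{W}}$. Dually $\psi_1\varphi_1$ sends $(k,b)$ to a pair whose $B$-coordinate is $U(\alpha)\rho_A(k,U(\beta)(b))=\rho_B(\theta_B,b)=b$ (using (\ref{addition axiom}), $U(\alpha)U(\beta)=1$ and the defining equation of the pullback) and whose fibre coordinate is $\sigma_A(\rho_A(k,U(\beta)(b)),U(\beta)(b))=k$ by $\sigma_X\langle\rho_X,\pi_2\rangle=\pi_1$; hence $\gamma\tau=1_{\ti{V}}$.

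The main obstacle is the two converse implications: the equations $\tau\gamma=1_{\ti{W}}$ and $\gamma\tau=1_{\ti{V}}$ only assert the triangles for arguments of the special form $(a,U(\beta)U(\alpha)(a))$, and I must recover them for arbitrary pairs. I would do this exactly as in the two Lemmas, by evaluating at the generic split epimorphism $\Delta(X)=(X\times X,X,\pi_2,\langle 1,1\rangle)$. There $U(A)=U(X)\times U(X)$, and for $a=(u,v)$ one has $U(\alpha)(a)=v$ and $U(\beta)U(\alpha)(a)=(v,v)$; since $\rho$ and $\sigma$ act componentwise on $U(X\times X)=U(X)\times U(X)$ by naturality with respect to the two projections $X\times X\to X$, the equation $\tau\gamma=1$ at $\Delta(X)$ reads componentwise as $\rho_X(\sigma_X(u,v),v)=u$ together with $\rho_X(\sigma_X(v,v),v)=v$, the latter being automatic from (\ref{subtraction axiom}) and (\ref{addition axiom}) and the former being the required triangle for all $(u,v)$. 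Identifying $U(X\times X)\pb{U(\pi_2)}{\theta_X}1$ with $U(X)$ via $u\mapsto(u,\theta_X)$, the equation $\gamma\tau=1$ at $\Delta(X)$ likewise yields $\sigma_X(\rho_X(u,v),v)=u$ for all $(u,v)$. This establishes the two equivalences and hence the bijection of (a) with (b).
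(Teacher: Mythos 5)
Your proposal is correct and follows essentially the same route the paper intends: the paper derives this corollary from the two lemmas of Section 1 (specializing Corollaries \ref{thm varphi and rho m=0 and n=1}, \ref{thm psi and sigma m=0 and n=1} and the analogues of Corollaries \ref{protomodular cor} and \ref{strange condition cor}), where the key device for the converse direction is precisely the evaluation at $\Delta(X)=(X\times X,X,\pi_2,\langle 1,1\rangle)$ together with the componentwise action of $\rho_{X\times X}$ and $\sigma_{X\times X}$ that you use. Your explicit component formulas $\varphi_1(k,b)=\rho_A(k,U(\beta)(b))$ and $\psi_1(a)=(\sigma_A(a,U(\beta)U(\alpha)(a)),U(\alpha)(a))$ agree with those encoded in the lemmas, so the argument is sound.
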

In the sections that follows we use the fact that the set of natural transformation $U^{\BF{n}}\to U$ (where $\BF{n}=\{1,\dots,n\}$ and $U$ is the forgetful functor from a variety to sets) is in bijection with the set of $n$-ary terms of the variety.  Since this is no longer true for arbitrary internal varieties (every term determines a natural transformation but not conversely) the results in the sections that follow hold only partially in arbitrary internal varieties, i.e. the existence of certain terms determine natural transformations between appropriate $V$ and $W$ but not conversely. 
\section{Pointed varieties}
\label{pointed varieties}
In this section we apply the results from Section \ref{general theory} to the special case where $\BB{A}=\V$ is a pointed variety, $\BB{X}=\Set_*$ is the category of pointed sets, $U$ is the usual forgetful functor, $\BF{m}=\emptyset$, $\BF{n}=\{1\}$, and $\theta$ is constructed using the constant of $\V$.

For any category $\BB{C}$ we define $\SE{\BB{C}}$ to be the category of split extensions: an object is a sextuple $(K,A,B,\kappa,\alpha,\beta)$ where $K$, $A$ and $B$ are objects in $\BB{C}$ and $\kappa:K\to B$, $\alpha :A\to B$ and $\beta : B\to A$ are morphisms in $\BB{C}$ with $(K,\kappa)$ the kernel of $\alpha$ and $\alpha\beta =1_B$; a morphism $(K,A,B,\kappa,\alpha,\beta) \to (K',A',B',\kappa',\alpha',\beta')$ is a triple $(u,v,w)$ of morphisms $u:K\to K'$, $v:A\to A'$ and $w:B\to B'$ such that in the diagram
\[
\xymatrix{
K\ar[r]^{\kappa}\ar[d]_{u}&A\ar@<0.5ex>[r]^{\alpha}\ar[d]_{v} & B\ar@<0.5ex>[l]^{\beta} \ar[d]^{w}\\
K'\ar[r]_{\kappa'}&A'\ar@<0.5ex>[r]^{\alpha'} & B'\ar@<0.5ex>[l]^{\beta'} 
}
\]
 $v\kappa=\kappa' u$, $\alpha'v=w\alpha$ and $v\beta=\beta'w$.
\begin{thm}
\label{pointed variety classification}
Let $\cal{V}$ be a pointed variety and let $P,Q: \SE{\cal{V}} \to \SE{\Set_*}$ be the functors taking $(K,A,B,\kappa,\alpha,\beta)$ to $(U(K),U(K)\times U(B),\langle 1,0\rangle, \pi_2,\langle 0,1\rangle)$ and $(U(K),U(A),U(B),U(\kappa),U(\alpha),U(\beta))$ respectively. 
\begin{enumerate}[(a)]
\item \label{unital}$\cal{V}$ is a unital variety \cite{Bourn 1996} if and only if there exists a natural transformation $P \to Q$ with component at $(K,A,B,\kappa,\alpha,\beta)$ of the form 
\[
\xymatrix{
U(K)\ar[r]&U(K)\times U(B)\ar@<0.5ex>[r]^{\pi_2}\ar@{-->}[d]&U(B)\ar@<0.5ex>[l]^{\langle 0,1\rangle}\\
U(K)\ar[r]\ar@{=}[u]&U(A)\ar@<0.5ex>[r]^{U(\alpha)}&U(B);\ar@<0.5ex>[l]^{U(\beta)}\ar@{=}[u]
}
\]
\item \label{subtractive}$\cal{V}$ is a subtractive variety \cite{Janelidze Z 2005} if and only if there exists a natural transformation  $Q \to P$  with component at $(K,A,B,\kappa,\alpha,\beta)$ of the form 
\[
\xymatrix{
U(K)\ar[r]\ar@{=}[d]&U(A)\ar@<0.5ex>[r]^{U(\alpha)}\ar@{-->}[d]&U(B)\ar@<0.5ex>[l]^{U(\beta)}\ar@{=}[d]\\
U(K)\ar[r]&U(K)\times U(B)\ar@<0.5ex>[r]^{\pi_2}&U(B);\ar@<0.5ex>[l]^{\langle 0,1\rangle}
}
\]
\item \label{right omega loop}$\cal{V}$ is a variety of right $\Omega$-loops if and only if there exists a natural isomorphism $P\to Q$ with component at $(K,A,B,\kappa,\alpha,\beta)$ of the form 
\[
\xymatrix{
U(K)\ar[r]&U(K)\times U(B)\ar@<0.5ex>[r]^{\pi_2}\ar@{-->}[d]&U(B)\ar@<0.5ex>[l]^{\langle 0,1\rangle}\\
U(K)\ar[r]\ar@{=}[u]&U(A)\ar@<0.5ex>[r]^{U(\alpha)}&U(B).\ar@<0.5ex>[l]^{U(\beta)}\ar@{=}[u]
}
\]
\end{enumerate}
\end{thm}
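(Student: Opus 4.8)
The plan is to reduce all three equivalences to the characterizations already obtained in Section~\ref{general theory} for the case $\BF{m}=\emptyset$, $\BF{n}=\{1\}$, and then to invoke the standard syntactic descriptions of unital and subtractive varieties. The first step is the identification of $P$ and $Q$ with the functors $\ti{V}$ and $\ti{W}$. Since $\V$ has kernels, the functor $\SE{\V}\to\PtV$ forgetting the kernel is an equivalence (a pseudo-inverse sends a split epimorphism to its canonical kernel); under it $P$ becomes $\ti{V}$ and $Q$ becomes $\ti{W}$, because $U(A)\pb{U(\alpha)}{\theta_B}1$ is precisely $U(K)$ and the structural maps agree (as already observed in the discussion introducing $V$ and $W$). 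Under this identification a natural transformation $P\to Q$ whose components are the identity on the $U(K)$-leg and the $U(B)$-leg is exactly a natural transformation $\tau:\ti{V}\to\ti{W}$ with $1_{D_{\BB{X}}}\circ\tau=1_{D_{\BB{A}}}$ satisfying (\ref{kernel identity varphi}): the left-hand square of the displayed morphism is (\ref{kernel identity varphi}), while the right-hand square records the normalization on $D_{\BB{X}}$. Dually, a natural transformation $Q\to P$ of the shape in (b) is a $\gamma:\ti{W}\to\ti{V}$ satisfying (\ref{kernel identity psi}).

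With this dictionary, (a) and (b) are immediate. For (a), Corollary~\ref{thm varphi and rho m=0 and n=1} turns $\tau$ into a natural transformation $\rho:U\times U\to U$ making (\ref{addition axiom}) commute; reading (\ref{addition axiom}) componentwise and using that natural transformations $U\times U\to U$ are exactly the binary terms of $\V$, this says $\V$ has a binary term $p$ with $p(x,0)=x$ and $p(0,y)=y$, which is the defining datum of a unital variety \cite{Bourn 1996}. For (b), Corollary~\ref{thm psi and sigma m=0 and n=1} turns $\gamma$ into a natural transformation $\sigma:U\times U\to U$ making (\ref{subtraction axiom}) commute, i.e. a binary term $s$ with $s(x,0)=x$ and $s(x,x)=0$; this is precisely what it means for $\V$ to be subtractive \cite{Janelidze Z 2005}.

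For (c) a natural isomorphism $P\to Q$ amounts to a mutually inverse pair $\tau,\gamma$ as above, so Corollary~\ref{thm right omega loop1} yields binary terms $\rho,\sigma$ satisfying, in the notation $x+y=\rho(x,y)$ and $x-y=\sigma(x,y)$, the six identities $x+0=x$, $0+y=y$, $x-0=x$, $x-x=0$, $(x-y)+y=x$ and $(x+y)-y=x$. In particular identities (1)--(4) hold, so $\V$ is a variety of right $\Omega$-loops. For the converse it suffices to check that a right $\Omega$-loop automatically satisfies the two extra identities appearing in Corollary~\ref{thm right omega loop1}: setting $y=0$ in $(x+y)-y=x$ and using $x+0=x$ gives $x-0=x$, while setting $x=y$ in $(x-y)+y=x$ and using $x-x=0$ gives $0+y=y$. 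Thus all hypotheses of Corollary~\ref{thm right omega loop1} are met, and the associated $\tau,\gamma$ assemble into the required natural isomorphism.

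The genuinely new content is only bookkeeping. The main (if modest) obstacle I anticipate is the first step: carefully establishing $P\cong\ti{V}$ and $Q\cong\ti{W}$ under $\SE{\V}\simeq\PtV$, and verifying that the ``identity on $U(K)$ and $U(B)$'' shape of the displayed transformations corresponds exactly to the normalization $1_{D_{\BB{X}}}\circ(-)=1_{D_{\BB{A}}}$ together with the kernel conditions (\ref{kernel identity varphi}) and (\ref{kernel identity psi}). Once this translation is in place, Corollaries~\ref{thm varphi and rho m=0 and n=1}, \ref{thm psi and sigma m=0 and n=1} and \ref{thm right omega loop1}, together with the two elementary derivations above, finish the proof.
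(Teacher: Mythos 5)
Your proposal is correct and follows essentially the same route as the paper: identify $P,Q$ with $\ti{V},\ti{W}$ in the case $\BF{m}=\emptyset$, $\BF{n}=\{1\}$, apply Corollaries \ref{thm varphi and rho m=0 and n=1}, \ref{thm psi and sigma m=0 and n=1} and \ref{thm right omega loop1} to translate the natural transformations into binary terms, and read off the Jónsson--Tarski, subtractive, and right $\Omega$-loop identities. You supply slightly more detail than the paper does (notably the two elementary derivations of $x-0=x$ and $0+y=y$ from identities (1)--(4) in the converse of (c), which the paper leaves implicit), but the argument is the same.
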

\begin{proof}
It is easy to see that to give a natural transformation $P\to Q$ as in (a) above is the same as to give a natural transformation $\tilde{V}\to \tilde{W}$ as in (a) of Corollary \ref{thm varphi and rho m=0 and n=1} which, by Corollary \ref{thm varphi and rho m=0 and n=1}, is uniquely determined by a natural transformation $\rho:U\times U\to U$ with components making the diagram (\ref{addition axiom}) commute.  And, such a natural transformation determines and is determined by a binary term $+$ such that for each $x,y$ in $X$, an algebra, $x+y=\rho_X(x,y)$.  The commutativity of (\ref{addition axiom}) then implies that $x+0=x=0+x$.
The statements (b) and (c) follow from Corollaries \ref{thm psi and sigma m=0 and n=1}, and \ref{thm right omega loop1} in a similar way.
\end{proof}
\begin{cor}
\label{main classification of right omega loops}
Let $\ti{P},\ti{Q} : \Pt{A} \to \Pt{X}$ be the functors sending $(A,B,\alpha,\beta)$ in $\Pt{A}$ to $(U(K\times B),U(B),U(\pi_2), U(\langle 0,1\rangle))$  (where $K=\emph{Ker}(\alpha)$) and  $(U(A),U(B),U(\alpha),U(\beta))$ respectively.  $\V$ is a variety of right $\Omega$-loops if and only if 
there exists a natural bijection $\ti{P} \to \ti{Q}$ with component $(A,B,\alpha,\beta)$ of the form
\[
\xymatrix{
U(K\times B) \ar@<0.5ex>[r]^{U(\pi_2)} \ar@{-->}[d]& U(B)\ar@<0.5ex>[l]^{U(\langle 0, 1\rangle)} \ar@{=}[d]\\
U(A) \ar@<0.5ex>[r]^{U(\alpha)} & U(B).\ar@<0.5ex>[l]^{U(\beta)}
}
\]
\end{cor}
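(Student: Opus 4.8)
The plan is to reduce the statement to the machinery of Section \ref{general theory} specialised as in this section, i.e. with $\BF{m}=\emptyset$, $\BF{n}=\{1\}$ and $\theta$ given by the constant $0$, where the relevant functors are $\ti{V},\ti{W}:\Pt{A}\to\Pt{X}$ and the relevant characterisation is Corollary \ref{thm right omega loop1}. First I would identify the two functors appearing in the statement with $\ti{W}$ and $\ti{V}$. By definition $\ti{Q}=\ti{W}$. For $\ti{P}$, note that the forgetful functor $U:\V\to\BB{X}$ preserves finite limits, so it sends the kernel $\kappa:K\to A$ of $\alpha$ to the kernel of $U(\alpha)$; since $\BF{m}=\emptyset$ and $\theta_B$ picks out the basepoint, this kernel is exactly $U(A)\pb{U(\alpha)}{\theta_B}1$. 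This yields a natural isomorphism $U(K)\cong U(A)\pb{U(\alpha)}{\theta_B}1$, and one checks directly that it is compatible with the projections to $U(B)$ and with the two splittings $U(\langle 0,1\rangle)$ and $\langle\langle\theta_A,1\rangle!,1\rangle$, so that it assembles into a natural isomorphism $\ti{P}\cong\ti{V}$. Under this identification a natural bijection $\ti{P}\to\ti{Q}$ of the displayed form is the same as a natural isomorphism $\tau:\ti{V}\to\ti{W}$ with $1_{D_{\BB{X}}}\circ\tau=1_{D_{\BB{A}}}$, together with its inverse $\gamma=\tau^{-1}:\ti{W}\to\ti{V}$.

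For the `if' direction the subtlety, and the step I expect to be the main obstacle, is that the displayed diagram only asks $\tau$ to be a morphism in $\Pt{X}$ and does not impose the kernel identity (\ref{kernel identity varphi}) that Corollary \ref{thm varphi and rho m=0 and n=1} requires. Indeed, for groups the family $\varphi_{(A,B,\alpha,\beta)}(k,b)=\kappa(k)^{-1}\beta(b)$ is a natural bijection of exactly the required form which nonetheless fails this identity, so one cannot simply quote Corollary \ref{thm right omega loop1}. To repair this I would extract from $\tau$ its defect on kernels: the map $\varphi_{1_{(A,B,\alpha,\beta)}}\langle 1,\theta_B!\rangle$ lands in the kernel of $U(\alpha)$, hence equals $U(\kappa)\circ m$ for a unique basepoint-preserving bijection $m$ of $U\circ\mathrm{Ker}$, and naturality of $m$ follows from that of $\tau$ together with $U(\kappa)$ being monic. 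Replacing $\varphi_{1}(k,b)$ by $\varphi_{1}(m^{-1}(k),b)$ gives a transformation that is again a natural bijection of the same form, that is now kernel-compatible (so (\ref{kernel identity varphi}) holds), and whose inverse is kernel-compatible as well (so (\ref{kernel identity psi}) holds).

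With a kernel-compatible natural isomorphism $\tau$ and its inverse $\gamma$ in hand, Corollary \ref{thm right omega loop1} produces natural transformations $\rho,\sigma:U\times U\to U$, equivalently binary terms $x+y$ and $x-y$, making the four diagrams of that corollary commute. Exactly as in the proof of Theorem \ref{pointed variety classification}(\ref{right omega loop}), the commutativity of diagrams (\ref{addition axiom}) and (\ref{subtraction axiom}) together with the two composite-inverse diagrams translates into the identities (1)--(4), so $\V$ is a variety of right $\Omega$-loops.

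The converse is the easy direction. If $\V$ is a variety of right $\Omega$-loops, its terms $x+y$ and $x-y$ give, via Corollary \ref{thm right omega loop1}, inverse natural transformations $\tau:\ti{V}\to\ti{W}$ and $\gamma:\ti{W}\to\ti{V}$ satisfying the kernel identities (\ref{kernel identity varphi}) and (\ref{kernel identity psi}). Transporting $\tau$ along the isomorphisms $\ti{V}\cong\ti{P}$ and $\ti{W}=\ti{Q}$ established above yields a natural bijection $\ti{P}\to\ti{Q}$ which, forgetting the kernel datum, is in particular a morphism in $\Pt{X}$ of the displayed form, completing the proof.
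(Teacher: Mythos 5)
Your argument is correct, but the hard (``if'') direction is organised differently from the paper's. You reduce to Corollary \ref{thm right omega loop1}, which requires the kernel identities (\ref{kernel identity varphi}) and (\ref{kernel identity psi}), and you therefore need the extra normalisation step: extracting the defect $m$ on kernels, checking it is a natural basepoint-preserving bijection, and precomposing with $m^{-1}\times 1$ to force kernel-compatibility (your group-theoretic example $\varphi(k,b)=\kappa(k)^{-1}\beta(b)$ correctly shows this step is not vacuous). The paper avoids this entirely: it invokes Corollary \ref{protomodular + strange condition cor} (the $V\to W$, $W\to V$ version \emph{without} kernel conditions, specialised to $\BF{m}=\emptyset$, $\BF{n}=\{1\}$), which hands over binary terms $\rho$ and $\sigma$ satisfying only $\sigma(x,x)=0$, $\rho(\sigma(x,y),y)=x$ and $\sigma(\rho(x,y),y)=x$, and then performs the correction at the level of terms by setting $x+y=\rho(\sigma(x,0),y)$ and $x-y=\rho(\sigma(x,y),0)$. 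The two corrections are really the same adjustment carried out on opposite sides of the term/natural-transformation correspondence; the paper's version is shorter because it needs no separate naturality check for $m$, while yours has the advantage of making explicit that any natural bijection of the stated form can be \emph{replaced} by a kernel-compatible one, which is a mildly stronger structural statement. Your treatment of the converse matches the paper's in substance (one does need the small observation, which you implicitly use, that $0+x=x$ and $x-0=x$ are derivable from the right loop identities so that diagrams (\ref{addition axiom}) and (\ref{subtraction axiom}) commute).
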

\begin{proof}
It follows from Corollary \ref{protomodular + strange condition cor} that a natural bijection $\ti{P} \to \ti{Q}$ as above is completely determined by and determines binary terms 
$\rho(x,y)$ and $\sigma(x,y)$ satisfying the identities $\sigma(x,x)=0$, $\rho(\sigma(x,y),y)=x$ and $\sigma(\rho(x,y),y)=x$.  Setting $x+y=\rho(\sigma(x,0),y)$ and $x-y=\rho(\sigma(x,y),0)$ determines terms that satisfy the right loop identities.
\end{proof}
\begin{rem}
In fact it can be shown that $\V$ is a variety of right $\Omega$-loops if and only if there exists a natural isomorphism $\ti{P}\to \ti{Q}$.
\end{rem}
\section{Protomodular varieties}
\label{varieties with constants}
In this section we give a new classification of protomodular varieties by  applying the results from Section \ref{general theory} to the case where $\BB{A}=\V$ is an arbitrary variety with constants, $\BB{X}=\Set$ is the category of sets, and $U$ is the usual forgetful functor.
\begin{thm}
$\V$ is a protomodular variety if and only if for some $\BF{m}=\{1,\dots,m\}$, $\BF{n}=\{1,\dots,n\}$ and $\theta$ there exist natural transformations $\tau:V\to W$ and $\gamma:W\to V$ with  $\tau\gamma =1_W$ and with components at each $(A,B,\alpha,\beta)$ in $\Pt{C}$ of the form
\[
\xymatrix@C=15ex{
(U(A)^\BF{n}\pb{U(\alpha)^\BF{n}}{\theta_B} U(B)^\BF{m}) \times U(B)\ar@<0.5ex>[r]^(0.6){\pi_2\times 1}\ar@{-->}@<0.5ex>[d] & U(B)^\BF{m} \times U(B)\ar@<0.5ex>[l]^(0.4){\langle U(\beta)^\BF{m} \theta_B,1\rangle \times 1}\ar@{=}[d] \\
U(B)^m\times U(A) \ar@<0.5ex>[r]^{1\times U(\alpha)}\ar@{-->}@<0.5ex>[u] & U(B)^\BF{m}\times U(B).\ar@<0.5ex>[l]^{1\times U(\beta)}
}
\]
\end{thm}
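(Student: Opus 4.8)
The plan is to prove both directions at once by pushing everything through Corollary \ref{protomodular cor} and the correspondence between natural transformations $U^k\to U$ and $k$-ary terms. First I would observe that to give natural transformations $\tau\colon V\to W$ and $\gamma\colon W\to V$ of the displayed form is exactly to give data of type (a) in Corollary \ref{protomodular cor}: the two dashed vertical arrows are the components of $\tau$ and $\gamma$, and the equalities on the right-hand vertical arrows encode the conditions $1_{D_\BB{X}}\circ\tau=1_{D_\BB{A}^\BF{m}\times D_\BB{A}}$ and $1_{D_\BB{X}}\circ\gamma=1_{D_\BB{A}^\BF{m}\times D_\BB{A}}$. Hence, by that corollary, such $\tau,\gamma$ with $\tau\gamma=1_W$ exist if and only if there are natural transformations $\rho\colon(U^\BF{n}\times U^\BF{m})\times U\to U$ and $\sigma\colon U^\BF{m}\times(U\times U)\to U^\BF{n}$ making (\ref{generalized addition axiom}), (\ref{generalized subtractions axiom}) and (\ref{generalized protomodular condition}) commute. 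Writing $\rho$ as an $(n+m+1)$-ary term $\rho(x_1,\dots,x_n,y_1,\dots,y_m,z)$, $\sigma$ as an $n$-tuple of $(m+2)$-ary terms $\sigma_i(y_1,\dots,y_m,u,v)$, and $\theta$ as an $n$-tuple of $m$-ary terms $t_i(y_1,\dots,y_m)$, reading off the three diagrams on elements gives the identities $\rho(t_1(\vec y),\dots,t_n(\vec y),\vec y,z)=z$, $\sigma_i(\vec y,x,x)=t_i(\vec y)$ and $\rho(\sigma_1(\vec y,u,v),\dots,\sigma_n(\vec y,u,v),\vec y,v)=u$, where $\vec y=(y_1,\dots,y_m)$.

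For the "only if" direction I would start from a protomodular variety, i.e.\ from constants $e_1,\dots,e_n$, binary terms $s_1,\dots,s_n$ and an $(n+1)$-ary term $p$ with $s_i(x,x)=e_i$ and $p(s_1(x,z),\dots,s_n(x,z),z)=x$. Here I would take $\BF{m}=\emptyset$, $\BF{n}=\{1,\dots,n\}$, and $\theta$ the natural transformation $1\to U^\BF{n}$ picking out $(e_1,\dots,e_n)$, and set $\rho=p$ and $\sigma_i=s_i$. Then the second and third identities above are literally $s_i(x,x)=e_i$ and $p(s_1(x,z),\dots,s_n(x,z),z)=x$, while the first, which in this case reads $p(e_1,\dots,e_n,z)=z$, follows by substituting $x=z$ into $p(s_1(x,z),\dots,s_n(x,z),z)=x$ and using $s_i(z,z)=e_i$. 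By Corollary \ref{protomodular cor} this produces the required $\tau$ and $\gamma$ with $\tau\gamma=1_W$.

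For the "if" direction I would take the terms $\rho$, $\sigma_i$, $t_i$ extracted in the first paragraph and eliminate the parameters $\vec y$ by freezing them at a constant. Since $\V$ has constants, fix a constant $c$ and write $\vec c=(c,\dots,c)$. Setting $n'=n+m$, I would keep the $(n'+1)$-ary term $P=\rho$ and define binary terms $S_i(x,z)=\sigma_i(\vec c,x,z)$ for $i\le n$ and $S_{n+j}(x,z)=c$ for $j\le m$, together with constants $E_i=t_i(\vec c)$ for $i\le n$ and $E_{n+j}=c$ for $j\le m$. The identity $\sigma_i(\vec y,x,x)=t_i(\vec y)$ then yields $S_i(x,x)=E_i$ for $i\le n$, while $S_{n+j}(x,x)=E_{n+j}$ holds trivially; and specialising $\rho(\sigma_1(\vec y,u,v),\dots,\sigma_n(\vec y,u,v),\vec y,v)=u$ at $\vec y=\vec c$ gives $P(S_1(x,z),\dots,S_{n'}(x,z),z)=x$, since the frozen coordinates $S_{n+j}=c$ fill exactly the $U^\BF{m}$-slots of $\rho$. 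These are the defining identities of a protomodular variety of type $n'$, so $\V$ is protomodular.

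The element-wise reading of the three diagrams is routine; the only genuinely delicate point is the "if" direction, where the data supplied by Corollary \ref{protomodular cor} carries the extra family of parameters $\vec y$ coming from the factor $U^\BF{m}$. The thing to get right is that these parameters can be absorbed: freezing them at a constant simultaneously converts the $(m+2)$-ary terms $\sigma_i$ into genuine binary terms and promotes the parameter coordinates into $m$ further constant "subtraction" terms, raising the type from $n$ to $n+m$. This is exactly the step where the standing assumption that $\V$ has constants is used.
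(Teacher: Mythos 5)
Your proposal is correct and follows essentially the same route as the paper: both directions reduce to Corollary \ref{protomodular cor} and the term identities it encodes, taking $\BF{m}=\emptyset$ with $\theta$ picking out $(e_1,\dots,e_n)$ for the \emph{only if} direction and freezing the parameters $\vec y$ at a constant for the \emph{if} direction. The only difference is cosmetic: the paper also specializes the $y$-slots of $\rho$ at the constant to obtain a protomodular presentation of type $n$, whereas you keep $\rho$ intact and pad with constant terms to obtain one of type $n+m$; both are valid.
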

\begin{proof}
It follows from Corollary \ref{protomodular cor} that natural transformations $\tau:V\to W$ and $\gamma:W\to V$ as above determine terms \[\rho(x_1,\dots,x_n,y_1,\dots,y_m,z) \textrm{ and }\sigma_i(y_1,\dots,y_m,x,z)\ i\in\BF{n}\] satisfying the identities
\[\sigma_i(y_1,\dots,y_m,x,x)=\theta_i(y_1,\dots,y_m)\ i\in\BF{n}\]
\[\rho(\sigma_1(y_1,\dots,y_m,x,z),\dots,\sigma_n(y_1,\dots,y_m,x,z),y_1,\dots,y_m,z)=x.\]  For any constant $e$ we may form new terms $e_i=\theta_i(e,\dots,e)$ $i\in\BF{n}$, $s_i(x,z)=\sigma_i(e,\dots,e,x,z)$ $i\in\BF{n}$, and $p(x_1,\dots,x_n,z)=\rho(x_1,\dots,x_n,e,\dots,e,z)$.  It easy to check that these terms make $\V$ a protomodular variety.  The converse follows from Corollary \ref{protomodular cor} with $\BF{m}=\emptyset$.
\end{proof}
\begin{rem}
The results in this section can easily be extended to $\V$ an infinitary variety, with $\BF{m}$ and $\BF{n}$ possibly infinite sets, giving, by Theorem 2.1 of \cite{Gran Rosicky 2004}, a new classification of infinitary protomodular varieties.
\end{rem}
\begin{rem}
It could also be interesting to study when $\gamma\tau =1_V$ (without $\tau\gamma=1_W$) which can be seen to be equivalent to the existence of $\rho$ and $\sigma$ as above, satisfying the identities:
\begin{eqnarray*}
\sigma_i(y_1,\dots,y_m,x,x)=\theta_i(y_1,\dots,y_m)\ i\in\BF{n}\\
\rho(\theta_1(y_1,\dots,y_m),\dots,\theta_n(y_1,\dots,y_m),y_1,\dots,y_m,x)=x\\
\sigma_i(y_1,\dots,y_m,\rho(x_1,..,x_n,y_1,..,y_m,z) ,z)=x_i\ i \in \BF{n}
\end{eqnarray*}
instead.
\end{rem}
\section{General varieties}
\label{general varieties}
In this section we consider the case where $\BB{A}=\V$ is a variety, $\BB{X}=\Set$ is the category sets, and $U$ is the usual forgetful functor.

For a variety $\V$ consider the condition:
\begin{con}
\label{p and q}
There exist ternary terms $p$ and $q$ satisfying the identities:
$p(x,x,y)=y$ and $p(q(x,y,z),z,y)=x=q(p(x,y,z),z,y)$.
\end{con}
It is easy to see that $q(x,x,y)=y$ follows from the conditions above, as remarked in \cite{Mal'tsev 1954}, where such a variety was called a \emph{biternary system}.
\begin{rem}
It is easy to see that if a variety $\V$ satisfies Condition \ref{p and q} then every regular epimorphism $f:E\to B$ is up to bijection a product projection $\pi_2: X\times B\to B$ for some $X$ (since  for each $b$ and $b'$ choosing $e$ and $e'$ in $f^{-1}(\{b\})$ and $f^{-1}(\{b'\})$ respectively gives a bijection $p(-,e,e'):f^{-1}(\{b\})\to f^{-1}(\{b\})$). 
\end{rem}
\begin{prop}
For a variety $\V$ the following conditions are equivalent:
\begin{enumerate}
\item
$\V$ satisfies Condition \ref{p and q};
\item There exist ternary terms $\ti{p}$ and $\ti{p}$ satisfying the identities:
$\ti{p}(x,x,y)=y=\ti{q}(x,x,y)$, $\ti{p}(x,y,y)=x=\ti{q}(x,y,y)$ and $\ti{p}(\ti{q}(x,y,z),z,y)=x=\ti{q}(\ti{p}(x,y,z),z,y)$;
\item
There exists a quaternary term $u$ satisfying the identities: $u(a,b,b,a)=b$ and
$u(u(a,b,c,d),b,d,c)=a$;
\item
There exists a quaternary term $\ti{u}$ satisfying the identities: $\ti{u}(a,b,b,a)=b=\ti{u}(a,a,b,a)$ and
$\ti{u}(a,b,c,c)=a=\ti{u}(\ti{u}(a,b,c,d),b,d,c)$;
\end{enumerate}
If in addition $\V$ has at least one constant, those conditions are further equivalent to:
\begin{enumerate}
\setcounter{enumi}{4}
\item For each constant $e$ there exist binary terms $x+y$ and $x-y$ satisfying the right loop identities (for that constant $e$).
\end{enumerate}
\end{prop}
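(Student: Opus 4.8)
The plan is to establish the equivalence of (1)--(4) by running the cycle $(1)\Rightarrow(2)\Rightarrow(4)\Rightarrow(3)\Rightarrow(1)$ with explicit witnessing terms, and to treat the constant case (1)$\Leftrightarrow$(5) separately. Two of the arrows are immediate: the terms $\ti{p},\ti{q}$ of (2) satisfy the defining identities of (1) verbatim, and the identities of (4) literally contain those of (3), so $(2)\Rightarrow(1)$ and $(4)\Rightarrow(3)$ require nothing. All the work is in the remaining two arrows and in the symmetrisation that makes the harder one possible.

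For $(1)\Rightarrow(2)$ I would first record, besides the already-noted $q(x,x,y)=y$, the two diagonal cancellations $p(q(x,y,y),y,y)=x$ and $q(p(x,y,y),y,y)=x$ obtained by putting $z=y$; these say that $p(-,y,z)$ and $q(-,z,y)$ are mutually inverse bijections for all parameters. I then set $\ti{p}(x,y,z)=q(p(x,y,z),z,z)$ and $\ti{q}(x,y,z)=q(p(x,y,y),y,z)$. A direct substitution gives $\ti{p}(x,x,y)=q(p(x,x,y),y,y)=q(y,y,y)=y$ and $\ti{p}(x,y,y)=q(p(x,y,y),y,y)=x$, and symmetrically $\ti{q}(x,x,y)=y$, $\ti{q}(x,y,y)=x$, so both operations are now Mal'cev on each side; the two cancellations of (2) hold because $\ti{p}(-,y,z)$ is the composite of the two-sided-invertible maps $p(-,y,z)$ and $q(-,z,z)$, while $\ti{q}(-,z,y)$ is exactly the composite of their inverses in the correct order. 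The point worth stressing is that the naive \emph{symmetric} guess for $\ti{q}$ destroys the cancellation; one must take the honest twisted inverse.

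The decisive arrow is $(2)\Rightarrow(4)$, and this is where the symmetrisation is used. A single Mal'cev operation cannot do the job: $\ti{u}(a,b,c,d)=\ti{p}(a,d,c)$ satisfies every identity of (4) except the involution, which would force $\ti{p}$ to be its own twisted inverse, whereas an involutive composite like $\ti{p}(\ti{q}(a,c,b),b,d)$ fails the diagonal normalisations because these fall on the uncontrolled anti-diagonal $\ti{p}(a,b,a)$. The resolution is to route the transport through the second variable $b$ as an intermediate frame:
\[
\ti{u}(a,b,c,d)=\ti{p}(\ti{q}(a,b,c),d,b).
\]
Using the now-available right-hand Mal'cev laws together with the cancellations one checks $\ti{u}(a,b,c,c)=\ti{p}(\ti{q}(a,b,c),c,b)=a$, $\ti{u}(a,b,b,a)=\ti{p}(a,a,b)=b$ and $\ti{u}(a,a,b,a)=\ti{p}(b,a,a)=b$, while the involution follows by first simplifying $\ti{q}(\ti{u}(a,b,c,d),b,d)=\ti{q}(a,b,c)$ via the cancellation for $\ti{q}$ and then applying the cancellation for $\ti{p}$ once more. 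I expect this to be the main obstacle, precisely because simultaneously achieving involutivity and correct normalisation on both diagonals is impossible from one operation and only becomes possible once both operations are genuinely two-sided.

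To close the cycle, $(3)\Rightarrow(1)$ comes from the restriction $p(x,y,z)=u(x,z,z,y)$, $q(x,y,z)=u(x,y,z,y)$: the normalisation $u(a,b,b,a)=b$ yields $p(x,x,y)=u(x,y,y,x)=y$, and each cancellation of (1) drops out of the involution of $u$ after matching parameters, e.g. $q(p(x,y,z),z,y)=u(u(x,z,z,y),z,y,z)=x$; running these same formulas backwards realises $(4)\Rightarrow(2)$ as a cross-check, the extra normalisations (II),(III) of (4) supplying the right-hand Mal'cev laws of (2). Finally, for (1)$\Leftrightarrow$(5) in the presence of a constant $e$ I would pass through the symmetric terms: putting $x+y=\ti{p}(x,e,y)$ and $x-y=\ti{q}(x,y,e)$ turns the Mal'cev laws into $x+e=x$, $x-x=e$ and the cancellations directly into $(x+y)-y=x$, $(x-y)+y=x$; conversely, from right-loop terms one first derives the left unit $e+y=y$ by setting $x=y$ in $(x-y)+y=x$, and then the single self-paired term $p(x,y,z)=q(x,y,z)=(x-y)+z$ satisfies (1), its twisted self-inverse property being exactly the two loop cancellations.
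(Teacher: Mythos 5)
Your proof is correct and follows essentially the same route as the paper: the same cycle of implications, with literally the same witnessing terms for $(2)\Rightarrow(4)$ (namely $\ti{u}(a,b,c,d)=\ti{p}(\ti{q}(a,b,c),d,b)$), for $(3)\Rightarrow(1)$, and for $(1)\Leftrightarrow(5)$. The only deviation is in $(1)\Rightarrow(2)$, where your terms $\ti{p}(x,y,z)=q(p(x,y,z),z,z)$ and $\ti{q}(x,y,z)=q(p(x,y,y),y,z)$ are the $p\leftrightarrow q$ mirror of the paper's choice and verify the required identities equally well.
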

\begin{proof}
The implications $2\Rightarrow 1$ and $4\Rightarrow 3$ are trivial.\\
$1\Rightarrow 2 :$ Given $p$ and $q$ define  $\ti{p}(x,y,z)=p(q(x,y,y),y,z)$ and $\ti{q}(x,y,z)=p(q(x,y,z),z,z)$.\\ 
$2\Rightarrow 4 :$ Given $\ti{p}$ and $\ti{q}$ define $\ti{u}(a,b,c,d)=\ti{p}(\ti{q}(a,b,c),d,b)$.\\
$3 \Rightarrow 1:$ Given $u$ define $p(x,y,z)=u(x,z,z,y)$ and $q(x,y,z)=u(x,y,z,y)$.
If in addition $\V$ has at least one constant.\\
$2\Rightarrow 5 :$ Given $\ti{p}$ and $\ti{q}$ for each constant $e$ define $x+y=\ti{p}(x,e,y)$ and $x-y=\ti{q}(x,y,e)$.\\
$5\Rightarrow 1 :$ Given $x+y$ and $x-y$ for some constant $e$ define $p(x,y,z)=q(x,y,z)= (x-y)+z$.
\end{proof}
\begin{rem}
It follows that a variety satisfying Condition \ref{p and q} is a Mal'tsev variety.
\end{rem}
\begin{thm}
\label{p and q thm1}
\begin{enumerate}[(a)]
\item If $\V$ satisfies Condition \ref{p and q}, then for $\BF{n}=\{1\}$, $\BF{m}=\{1\}$ and $\theta=1_{U}$ there exists a natural isomorphism $\tau:V\to W$ with component at each $(A,B,\alpha,\beta)$ in $\Pt{C}$ of the form
\[
\xymatrix@C=15ex{
(U(A)^\BF{n}\pb{U(\alpha)^\BF{n}}{\theta_B} U(B)^\BF{m}) \times U(B)\ar@<0.5ex>[r]^(0.6){\pi_2\times 1}\ar@{-->}[d] & U(B)^\BF{m} \times U(B)\ar@<0.5ex>[l]^(0.4){\langle U(\beta)^\BF{m} \theta_B,1\rangle \times 1}\ar@{=}[d] \\
U(B)^\BF{m}\times U(A) \ar@<0.5ex>[r]^{1\times U(\alpha)}& U(B)^\BF{m}\times U(B);\ar@<0.5ex>[l]^{1\times U(\beta)}
}
\]
\item If for some $\BF{n}=\{1,\dots,n\}$, $\BF{m}=\{1,\dots,m\}$ and $\theta$ there exists a natural isomorphism $\tau:V\to W$ with component at each $(A,B,\alpha,\beta)$ in $\Pt{C}$ of the form 
\[
\xymatrix@C=15ex{
(U(A)^\BF{n}\pb{U(\alpha)^\BF{n}}{\theta_B} U(B)^\BF{m}) \times U(B)\ar@<0.5ex>[r]^(0.6){\pi_2\times 1}\ar@{-->}[d] & U(B)^\BF{m} \times U(B)\ar@<0.5ex>[l]^(0.4){\langle U(\beta)^\BF{m} \theta_B,1\rangle \times 1}\ar@{=}[d] \\
U(B)^\BF{m}\times U(A) \ar@<0.5ex>[r]^{1\times U(\alpha)}& U(B)^\BF{m}\times U(B),\ar@<0.5ex>[l]^{1\times U(\beta)}
}
\]
then $\V$ satisfies Condition \ref{p and q}.
\end{enumerate}
\end{thm}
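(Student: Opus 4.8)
The plan is to route both implications through Corollary \ref{protomodular + strange condition cor}, which (reading natural transformations $U^k\to U$ as $k$-ary terms, as explained at the end of Section \ref{general theory}) identifies a natural isomorphism $\tau\colon V\to W$ of the stated form with a term $\rho(x_1,\dots,x_n,y_1,\dots,y_m,z)$ and terms $\sigma_i=\sigma_i(y_1,\dots,y_m,a,b)$, $i\in\BF{n}$, satisfying the four identities encoded by the diagrams (\ref{generalized addition axiom}), (\ref{generalized subtractions axiom}), (\ref{generalized protomodular condition}) and (\ref{strange condition}). Writing $\underline{y}$ for $y_1,\dots,y_m$ and $\underline{x}$ for $x_1,\dots,x_n$, these read
\[\rho(\theta_1(\underline{y}),\dots,\theta_n(\underline{y}),\underline{y},z)=z,\qquad \sigma_i(\underline{y},a,a)=\theta_i(\underline{y}),\]
\[\rho(\sigma_1(\underline{y},a,b),\dots,\sigma_n(\underline{y},a,b),\underline{y},b)=a,\qquad \sigma_i(\underline{y},\rho(\underline{x},\underline{y},z),z)=x_i.\]
Thus the whole theorem reduces to a purely equational comparison between these terms and the biternary terms $p,q$ of Condition \ref{p and q}.

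For part (a), with $\BF{n}=\BF{m}=\{1\}$ and $\theta=1_U$ (so $\theta_1(y)=y$), I would set $\rho(x,y,z):=p(x,y,z)$ and $\sigma(y,a,b):=q(a,b,y)$ and verify the four displayed identities directly: they become $p(y,y,z)=z$; the derived identity $q(a,a,y)=y$; the identity $p(q(a,b,y),y,b)=a$; and $q(p(x,y,z),z,y)=x$. Each is a defining identity of Condition \ref{p and q} or an immediate consequence, so Corollary \ref{protomodular + strange condition cor} delivers the required natural isomorphism.

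Part (b) is the substantial direction. From the isomorphism I obtain $\rho$ and the $\sigma_i$ satisfying the four identities, and the key device is to collapse all $m$ parameters to a single variable $w$ by defining the ternary term
\[\mu_w(a,b,c):=\rho\bigl(\sigma_1(w,\dots,w,a,b),\dots,\sigma_n(w,\dots,w,a,b),w,\dots,w,c\bigr).\]
For every $w$ the four identities give $\mu_w(a,b,b)=a$ (third identity), $\mu_w(a,a,c)=c$ (second followed by first identity), and, by the fourth identity followed by the third, the self-inverse law $\mu_w(\mu_w(a,b,c),c,b)=a$. So for each fixed $w$ the operation $\mu_w$ is Mal'tsev and self-inverting.

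The remaining point — which I expect to be the main obstacle — is that $w$ must be eliminated to yield honest \emph{ternary} terms, and one cannot simply take $p=q=\mu_w$: the self-inverse law needs the \emph{same} $w$ in the inner and outer applications, whereas in the composites $q(p(x,y,z),z,y)$ and $p(q(x,y,z),z,y)$ the first argument changes between the two calls. The resolution I propose is to bind $w$ to a slot whose value is common to both calls, namely
\[p(x,y,z):=\mu_y(x,y,z),\qquad q(x,y,z):=\mu_z(x,y,z),\]
binding $w$ to the second argument in $p$ and to the third in $q$. Then $p(x,x,y)=\mu_x(x,x,y)=y$; in $q(p(x,y,z),z,y)$ the outer $q$ (third slot $y$) and the inner $p$ (second slot $y$) both use $w=y$, giving $\mu_y(\mu_y(x,y,z),z,y)=x$; and symmetrically $p(q(x,y,z),z,y)=\mu_z(\mu_z(x,y,z),z,y)=x$. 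This establishes Condition \ref{p and q}, and once the parameter binding is found every step is a single application of one of the four identities.
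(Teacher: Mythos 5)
Your proposal is correct and follows essentially the same route as the paper: both directions go through Corollary \ref{protomodular + strange condition cor}, part (a) uses exactly the assignment $\rho=p$, $\sigma(y,a,b)=q(a,b,y)$, and your terms $p(x,y,z)=\mu_y(x,y,z)$, $q(x,y,z)=\mu_z(x,y,z)$ unfold to precisely the paper's definitions of $p$ and $q$ in part (b). Your explicit verification via the parametrized operation $\mu_w$ (in particular the self-inverse law $\mu_w(\mu_w(a,b,c),c,b)=a$ and the observation that the parameter binding must agree between the inner and outer calls) is a welcome expansion of the paper's ``it is easy to check.''
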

\begin{proof}
\begin{enumerate}[(a)]
\item Let $\BF{n}=\BF{m}=\{1\}$ and $\theta=1_{U}$.  Given ternary terms $p$ and $q$ as in Condition \ref{p and q}, it is easy to check that $\rho=p$ and $\sigma(x,y,z)=q(y,z,x)$ define natural transformations making the diagrams (\ref{generalized addition axiom}), (\ref{generalized subtractions axiom}), (\ref{generalized protomodular condition}) and (\ref{strange condition}) commute.  Therefore by Corollary \ref{protomodular + strange condition cor} determine a natural isomorphism $V\to W$, as required.
\item If for some $\BF{n}=\{1,\dots,n\}$, $\BF{m}=\{1,\dots,m\}$ and $\theta$ there exists an isomorphism $V\to W$ then by Corollary \ref{protomodular + strange condition cor} there exist terms $\rho(x_1,\dots,x_n,y_1,\dots,y_m,z)$ and $\sigma_i(y_1,\dots,y_m,x,z)$ $i\in \BF{n}$ satisfying the identities:
\begin{eqnarray*}
&\sigma_i(y_1,\dots,y_m,x,x)=\theta_i(y_1,\dots,y_m) &\\
&\rho(\sigma_1(y_1,\dots,y_m,x,z),\dots,\sigma_n(y_1,\dots,y_m,x,z),y_1,\dots,y_m,z)=x &\\
&\sigma_i(y_1,\dots,y_m,\rho(x_1,..,x_n,y_1,..,y_m,z) ,z)=x_i. &
\end{eqnarray*}  Let $p$ and $q$ be the terms defined by 
\begin{eqnarray*}
& p(x,y,z)=\rho(\sigma_1(y,\dots,y,x,y),\dots,\sigma_n(y,\dots,y,x,y),y,\dots,y,z) &\\
& q(x,y,z)=\rho(\sigma_1(z,\dots,z,x,y),\dots,\sigma_n(z,\dots,z,x,y),z,\dots,z,z).&
\end{eqnarray*}
It is easy to check that $p$ and $q$ satisfy the desired identities as in Condition \ref{p and q}.
 \end{enumerate}
\end{proof}
Recall that for any category $\BB{C}$ the functor $D_{\BB{C}}$ is the functor $\Pt{C} \to \BB{C}$ taking $(A,B,\alpha,\beta)$ to $B$. 
\begin{thm}
\label{generalization of introduction equivalent formulations}
Let $V$ and $W$ be the functors defined in Section \ref{general theory} with $\BB{A}=\V$, $\BB{X}=\Set$, $U$ the usual forgetful functor, $\BF{n}=\BF{m}=\{1\}$ and $\theta=1_{U}$. Let $P,Q:(\BB{A}\downarrow D_\BB{A})\to \Pt{X}$ be the functors sending $(E,(A,B,\alpha,\beta),f)$ to 
\[(U(A\pb{\alpha}{f}E)\times U(B),U(E)\times U(B), U(\pi_2)\times 1,U(\langle \beta f,1\rangle)\times 1)\] and \[(U(E)\times U(A),U(E)\times U(B),1\times U(\alpha),1\times U(\beta))\] respectively.
The following are equivalent:
\begin{enumerate}
\item There exists an isomorphism $\tau:V \to W$ with component at each $(A,B,\alpha,\beta)$ in $\Pt{A}$
of the form
\[
\xymatrix@C=15ex{
(U(A)^\BF{n}\pb{U(\alpha)^\BF{n}}{\theta_B} U(B)^\BF{m}) \times U(B)\ar@<0.5ex>[r]^(0.6){\pi_2\times 1}\ar@{-->}[d] & U(B)^\BF{m} \times U(B)\ar@<0.5ex>[l]^(0.4){\langle U(\beta)^\BF{m} \theta_B,1\rangle \times 1}\ar@{=}[d] \\
U(B)^{\BF{m}}\times U(A) \ar@<0.5ex>[r]^{1\times U(\alpha)}& U(B)^\BF{m}\times U(B);\ar@<0.5ex>[l]^{1\times U(\beta)}
}
\]
\item There exists an isomorphism $\chi:P\to Q$ with component at each $(E,(A,B,\alpha,\beta),f)$ in $(\BB{A}\downarrow D_\BB{A})$ of the form
\[
\xymatrix{
(U(A\pb{\alpha}{f}E)\times U(B) \ar@<0.5ex>[r]^{U(\pi_2)\times 1}\ar@{-->}[d] & U(E)\times U(B) \ar@<0.5ex>[l]^{U(\langle \beta f,1\rangle) \times 1}\ar@{=}[d]\\
U(E)\times U(A) \ar@<0.5ex>[r]^{1\times U(\alpha)} & U(E)\times U(B); \ar@<0.5ex>[l]^{1\times U(\beta)}
}
\]
\item $\V$ satisfies Condition \ref{p and q}.
\end{enumerate}
\end{thm}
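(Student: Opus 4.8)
The plan is to prove the cycle of implications $(2)\Rightarrow(1)\Rightarrow(3)\Rightarrow(2)$. The implication $(1)\Rightarrow(3)$ is already available: it is exactly Theorem \ref{p and q thm1}(b) specialised to $\BF{n}=\BF{m}=\{1\}$ and $\theta=1_U$, which extracts from $\tau$ terms $\rho,\sigma$ and hence the biternary terms of Condition \ref{p and q}. Thus the genuinely new content lives in the passage between the ``restricted'' condition $(1)$ and the ``general'' condition $(2)$, and I would isolate it in the two remaining arrows.

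For $(2)\Rightarrow(1)$ I would exhibit $V,W$ as the restrictions of $P,Q$ along the diagonal choice $f=1_B$. Concretely, define a functor $S\colon\Pt{A}\to(\BB{A}\downarrow D_\BB{A})$ by $S(A,B,\alpha,\beta)=(B,(A,B,\alpha,\beta),1_B)$, sending a morphism $(f,g)$ of $\Pt{A}$ to the pair consisting of $(f,g)$ itself together with the $\BB{A}$-component $g\colon B\to B'$ (the comma square commutes since $g\circ 1_B=1_{B'}\circ g$). Because $U$ preserves finite limits we have $A\pb{\alpha}{1_B}B\cong A$, and since $\theta=1_U$ forces $\theta_B=1_{U(B)}$ and hence $U(A)\pb{U(\alpha)}{\theta_B}U(B)\cong U(A)$, comparing the defining data gives natural isomorphisms $P\circ S\cong V$ and $Q\circ S\cong W$ compatible with the split-extension structures. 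Whiskering $\chi$ by $S$ and composing with these isomorphisms then yields an isomorphism $V\to W$ of the form displayed in $(1)$, which is the desired $\tau$.

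The main work is $(3)\Rightarrow(2)$. Given ternary terms $p,q$ as in Condition \ref{p and q}, I would define the component of $\chi$ at $(E,(A,B,\alpha,\beta),f)$ by
\[
\chi\big((a,e),b\big)=\big(e,\;p(a,\beta f(e),\beta b)\big),
\]
for $(a,e)\in U(A\pb{\alpha}{f}E)$ (so that $\alpha a=f e$) and $b\in U(B)$, with candidate inverse
\[
\chi^{-1}(e,a')=\big(\,(q(a',\beta\alpha a',\beta f e),\,e),\;\alpha a'\big).
\]
Applying the homomorphism $\alpha$ together with $\alpha\beta=1$, $\alpha a=fe$ and the identity $p(x,x,y)=y$ shows that $p(a,\beta fe,\beta b)$ projects to $b$ under $\alpha$, while $q(x,x,y)=y$ shows that $q(a',\beta\alpha a',\beta fe)$ lands in the pullback; the identities $p(q(x,y,z),z,y)=x$ and $q(p(x,y,z),z,y)=x$ then give that $\chi$ and $\chi^{-1}$ are mutually inverse. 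The same computations verify that the two triangles making $\chi$ a morphism of split extensions in $\Set$ commute, namely preservation of the projections $1\times U(\alpha)$ against $U(\pi_2)\times 1$ (the downward direction) and of the splittings $1\times U(\beta)$ against $U(\langle\beta f,1\rangle)\times 1$ (the upward direction, where $p(x,x,y)=y$ is used directly).

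I expect the essential obstacle to be bookkeeping rather than ideas: one must check naturality of $\chi$ in \emph{both} variables of the comma category $(\BB{A}\downarrow D_\BB{A})$ — the $\Pt{A}$-variable $(f\colon A\to A',\,g\colon B\to B')$ and the arrow-variable $E\to E'$ over $B$. Since every map occurring is $U$ applied to a homomorphism composed with the fixed terms $p,q$, each naturality square reduces to the statement that homomorphisms commute with term operations; this is routine but lengthy to write out in full. Once $(3)\Rightarrow(2)$ is established, the cycle $(2)\Rightarrow(1)\Rightarrow(3)\Rightarrow(2)$ closes and all three conditions are equivalent.
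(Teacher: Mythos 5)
Your proposal is correct and follows essentially the same route as the paper: the link between (1) and (3) via Theorem \ref{p and q thm1}, the restriction along $(A,B,\alpha,\beta)\mapsto(B,(A,B,\alpha,\beta),1_B)$ for (2)$\Rightarrow$(1), and the explicit formulas $\chi((a,e),b)=(e,p(a,\beta f(e),\beta b))$ and $\chi^{-1}(e,a')=((q(a',\beta\alpha(a'),\beta f(e)),e),\alpha(a'))$ for (3)$\Rightarrow$(2) are exactly those used in the paper. The only cosmetic difference is that you arrange the implications as a cycle $(2)\Rightarrow(1)\Rightarrow(3)\Rightarrow(2)$ while the paper establishes $1\Leftrightarrow 3$ directly and then adds $2\Rightarrow 1$ and $3\Rightarrow 2$.
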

\begin{proof}
The  equivalence of $1$ and $3$ follows from Theorem \ref{p and q thm1}.  It is easy to show that $2 \Rightarrow 1$ since $P$ and $Q$ composed with the functor sending $(A,B,\alpha,\beta)$ in $\Pt{A}$ to $(B,(A,B,\alpha,\beta),1_B)$ in $(\BB{A}\downarrow D_\BB{A})$ are up to natural isomorphism the functors $V$ and $W$ respectively.
 We will show that $3 \Rightarrow 2$.
 
 Let $p$ and $q$ be ternary terms as in Condition \ref{p and q}.  It is easy to check that $\chi$ with component at each $(E,(A,B,\alpha,\beta),f)$ defined by $\chi_{(E,(A,B,\alpha,\beta),f)} = (\varphi_{(E,(A,B,\alpha,\beta),f)},1_{U(B)})$ where $\varphi_{(E,(A,B,\alpha,\beta),f)}((a,e),b) = (e,p(a,\beta f(e),\beta(b)))$ is an isomorphism with inverse $\chi^{-1}_{{(E,(A,B,\alpha,\beta),f)}}=(\psi_{(E,(A,B,\alpha,\beta),f)},1_U(B))$ where $\psi_{(E,(A,B,\alpha,\beta),f)}(e,a) = ((q(a,\beta\alpha(a),\beta f(e)),e),\alpha(a))$.
\end{proof}

\end{document}